\numberwithin{equation}{section}
\newtheorem{theorem}{Theorem}[section]
\newtheorem{lemma}[theorem]{Lemma}
\newtheorem{corollary}[theorem]{Corollary}
\newtheorem{claim}[theorem]{Claim}
\theoremstyle{definition}
\newtheorem{maintheorema}{Main Theorem}
\newtheorem{definition}[theorem]{Definition}
\theoremstyle{remark}
\newtheorem{acknowledgement}{Acknowledgement}
\newcommand{\Spec}{\operatorname{Spec}}
\newcommand{\rad}{\operatorname{rad}}
\newcommand{\Ht}{\operatorname{ht}}
\newcommand{\Max}{\operatorname{Max}}
\newcommand{\Reg}{\operatorname{Reg}}
\newcommand{\Sing}{\operatorname{Sing}}
\newcommand{\fm}{\frak{m}}
\newcommand{\fp}{\frak{p}}
\newcommand{\fq}{\frak{q}}
\newcommand{\fa}{\frak{a}}
\newcommand{\fn}{\frak{n}}
\newcommand{\fS}{\frak{S}}
\begin{document}
\title[Ideal-adic completion of quasi-excellent rings (after Gabber)]
{Ideal-adic completion of quasi-excellent rings (after Gabber)}
\dedicatory{Dedicated to Professor~Jun-ichi~Nishimura on the~occasion of his~70th~birthday.}

\author[K. Kurano]{Kazuhiko Kurano}
\address{Department of Mathematics, School of Science and Technology,
Meiji University, Higashimata 1-1-1, Tama-ku, Kawasaki 214-8571, Japan}
\email{kurano@meiji.ac.jp}

\author[K. Shimomoto]{Kazuma Shimomoto}
\address{Department of Mathematics, College of Humanities and Sciences, Nihon University, Setagaya-ku, Tokyo 156-8550, Japan}
\email{shimomotokazuma@gmail.com}

\thanks{2010 {\em Mathematics Subject Classification\/}: 13B35, 13F25, 13F40}

\keywords{Ideal-adic completion, lifting problem, local uniformization, excellent ring,  quasi-excellent ring}


\begin{abstract}
In this paper, we give a detailed proof to a result of Gabber (unpublished) on the lifting problem of quasi-excellent rings, extending the previous work on Nishimura-Nishimura. As a corollary, we establish that an ideal-adic completion of an excellent (resp.\ quasi-excellent) ring is excellent  (resp.\ quasi-excellent).
\end{abstract}

\maketitle

\setcounter{section}{0}

\section{Introduction}

Throughout this paper, we assume that all rings are commutative and possess an identity. The aim of this article is to give a detailed proof to the following theorem (see Theorem \ref{quasi-excellent}).

\begin{maintheorema}[Nishimura-Nishimura, Gabber]
\label{MT1}
Let $A$ be a Noetherian ring, and $I$ an ideal of $A$. Assume that $A$ is $I$-adically complete. Then, if $A/I$ is quasi-excellent, so is $A$.
\end{maintheorema}

This result was proved in characteristic 0 by Nishimura-Nishimura in \cite{Nishimura87}, using the resolution of singularities. More recently, the general case was settled by Gabber, using his theorem of weak local uniformization of quasi-excellent schemes instead of resolutions of singularities. 
The idea of his proof is sketched in a letter \cite{Laszlo07} from Gabber to Laszlo. The above theorem is a special and difficult case of the \textit{Lifting Problem}, which was formulated by Grothendieck \cite[Remarque (7.4.8)]{Gro}. For the precise definition of the lifting problem as well as its variations, we refer the reader to Definition~\ref{GrothendieckLifting} in Appendix~\ref{appA}. 
As an immediate and important corollary, we obtain the following theorem (see Appendix~\ref{appA}).

\begin{maintheorema}
\label{MT2}
Let $A$ be an excellent (resp.\ quasi-excellent) ring with an ideal $I \subset A$. Then the $I$-adic completion of $A$ is an excellent (resp.\ quasi-excellent) ring. In particular, if $A$ is excellent (resp.\ quasi-excellent), then the formal power series ring $A[[x_1,\ldots,x_n]]$ is excellent (resp.\ quasi-excellent).
\end{maintheorema}

Here is an outline of the paper.

In \S~\ref{quasi-excellent1}, we fix notation and definitions concerning excellent rings for which we refer the reader to \cite{Ma80} and \cite{Nishimura87}. 

In \S~\ref{quasi-excellent2}, we begin with the notions of quasi-excellent schemes and alteration coverings. Then we recall the recent result of Gabber on the existence of weak local uniformization of a quasi-excellent Noetherian scheme as an alteration covering.

In \S~\ref{quasi-excellent3}, using Gabber's weak local uniformization theorem, we give a proof to the classical theorem of Brodmann and Rotthaus in the full generality. This result is an important step toward the proof of the main result of this paper.

In \S~\ref{quasi-excellent4}, we finish the proof of the main result after proving a number of intermediate lemmas based on the ideas explained in \cite{Nishimura87} and \cite{Laszlo07}. Gabber's weak local uniformization theorem plays a role in this section too.

We introduce notions of the lifting property, the local lifting property, etc.
We make a table on the known results in Appendix~\ref{appA}.

\section{Notation and conventions}
\label{quasi-excellent1}

We use the following notation. Let $I$ be an ideal of a ring $A$. Then denote by $V(I)$ the set of all prime ideals of $A$ containing $I$. 
Put $D(I)=\Spec A \setminus V(I)$.
If $(A,\fm)$ is a local ring, we denote by $\widehat{A}$ or $A^\wedge$ the $\fm$-adic completion. For an integral domain $A$, denote by $Q(A)$ the field of fractions of $A$. 

We refer the reader to the definition of \textit{geometrically regular} and \textit{regular morphism} to \cite[(33.A)]{Ma80}. We refer the reader to the definition of \textit{$G$-ring} and \textit{$Z$-ring} to \cite[Definition~(0.1)]{Nishimura87}. We refer the reader to the definition of \textit{catenary}, \textit{universally catenary}, \textit{Nagata ring} (or \textit{universally Japanese ring}), \textit{$J_2$-ring}, \textit{quasi-excellent}, \textit{excellent} to (14.B),  (31.A),  (32.B),  (34.A) in \cite{Ma80}. 

Any field, the ring of integers and complete Noetherian local rings are excellent rings. Any semi-local $G$-ring is a $J_2$-ring (cf. \cite[Theorem 76]{Ma80}). Hence semi-local $G$-rings are quasi-excellent.
Quasi-excellent rings are Nagata rings (cf. \cite[Theorem 78]{Ma80}).

\section{Gabber's weak local uniformization theorem for quasi-excellent schemes}
\label{quasi-excellent2}

In this section, let us recall a recent result on the existence of weak local uniformizations for quasi-excellent Noetherian schemes, due to Gabber.

\begin{definition}
A Noetherian scheme $X$ is \textit{quasi-excellent} (resp.\ \textit{excellent}), if $X$ admits an open affine covering, each of which is the spectrum of a quasi-excellent ring (resp. an excellent ring). Once this condition holds, then any other open affine covering has the same property.
\end{definition}

We introduce the notion of alteration covering of schemes.

\begin{definition}
In this definition, we assume that all schemes are reduced and Noetherian, and all morphisms are maximally dominating, generically finite morphisms of finite type between Notherian reduced schemes.

Let $Y$ be a Noetherian integral scheme. We say that a finite family of scheme maps $\{ \phi_i :X_i \to Y \}_{i=1,\ldots, m}$ is an \textit{alteration covering} of $Y$, 
if there exist a proper surjective morphism $f:V \to Y$, a Zariski open covering $V=\bigcup_{i=1}^m V_i$, together with a family of scheme maps $\{\psi_i:V_i \to X_i\}_{i=1,\ldots,m}$ such that the following diagram commutes for each $i= 1, \ldots,m$
\begin{equation}
\label{diagram}
\begin{array}{ccccc}
V_i & {\longrightarrow} & V &\\
\hphantom{\scriptstyle \psi_i}\downarrow{\scriptstyle \psi_i} &&  \hphantom{\scriptstyle f}\downarrow{\scriptstyle f}
& & \\
X_i & \stackrel{\phi_i}{\longrightarrow} & Y\\
\end{array}
\end{equation}
where $V_i \to V$ is the natural open immersion.

If $X_i$ is a regular integral scheme for each $i = 1,\ldots,m$, 
we say that $\{ \phi_i :X_i \to Y \}_{i = 1,\ldots,m}$ is a \textit{regular alteration covering} of $Y$.
\end{definition}

We refer the reader to \cite{Book} for alteration coverings or the alteration topology in the general situation. The definition adopted in \cite{Book} looks slightly different from the above one. However, we may resort to \cite[Th\'eor\`eme 3.2.1; EXPOS\'E II]{Book}. A family of morphisms of the type we consider $\{ \phi_i :X_i \to Y \}_{i=1,\ldots, m}$, or a family of morphisms in ${\rm alt}/Y$ is a covering family for the alteration topology if and only if it is a covering family for the $h$-topology. Let us state Gabber's weak local uniformization theorem for which we refer the reader to \cite{Book}.

\begin{theorem}[Gabber]
\label{Gabberuniformization}
Assume that $Y$ is a quasi-excellent Noetherian integral scheme. Then there exists a regular alteration covering of $Y$.
\end{theorem}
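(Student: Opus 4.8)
The plan is to reduce the statement, by a d\'evissage, to a \emph{geometric} situation --- integral schemes of finite type over a field or over a discrete valuation ring --- where the existence of alterations is available through the work of de Jong and its refinements by Gabber. First I would use that the property ``$Y$ admits a regular alteration covering'' is local for the alteration topology (this is exactly what the formalism of \cite{Book} is designed to exploit): combined with Noetherian induction on $\dim Y$, this reduces the problem to $Y = \Spec A$ with $(A,\fm)$ a henselian local quasi-excellent domain. Since $A$ is a $G$-ring, the map $A \to \widehat{A}$ is regular, hence by the N\'eron--Popescu desingularization theorem $\widehat{A}$ is a filtered colimit of smooth $A$-algebras; together with Artin approximation over the henselian ring $A$, this lets me transfer a regular alteration covering of $\Spec \widehat{A}$ back to one of $\Spec A$. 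Thus it is enough to treat $A$ complete local.

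For a complete local domain $A$, the Cohen structure theorem writes $A = V[[x_1,\dots,x_n]]/J$ with $V$ a field (the equal-characteristic case) or a complete discrete valuation ring (the mixed-characteristic case), so that $\Spec A$ is a closed subscheme of the regular scheme $\Spec V[[x_1,\dots,x_n]]$. Applying approximation once more to replace the power series ring by a ring essentially of finite type over the excellent base $\Spec V$, one is reduced to the geometric case: $Y$ an integral scheme of finite type over a field or an excellent (complete) discrete valuation ring. In fact one must prove here the stronger ``with boundary'' statement --- that a prescribed nowhere dense closed subset of $Y$ can be made a strict normal crossings divisor on each member of the covering --- since only this form survives the induction.

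The geometric case I would attack by induction on the (relative) dimension, via a fibration of $Y$ over a curve over the base that is a morphism on a dense open, reducing successively to relative dimension one (semistable curves) and to the base, in the spirit of de Jong's construction of alterations; in equal characteristic zero one may instead invoke Hironaka's resolution and a single modification suffices. The main obstacle --- and the genuinely new ingredient --- is the case of positive residue characteristic: wild ramification and inseparability obstruct upgrading a normal total space to a \emph{regular} one after the fibration step. This is resolved by Gabber's prime-to-$\ell$ alteration technique: choosing an auxiliary prime $\ell$ invertible on the relevant strata, one arranges the generic fibre of the fibration to be generically smooth and its discriminant to be controlled, at the price of passing to a covering rather than a single alteration --- which is precisely why the conclusion is phrased in terms of an alteration \emph{covering}. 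Carrying the strict normal crossings bookkeeping through each stage of the induction, and then patching the resulting local solutions back together in the alteration topology, completes the argument; the mixed-characteristic fibration step and the prime-to-$\ell$ machinery are where essentially all of the difficulty resides, and for the complete details we refer to \cite{Book}.
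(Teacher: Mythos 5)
The paper does not prove this statement at all: Theorem \ref{Gabberuniformization} is Gabber's local uniformization theorem, quoted as a black box with an explicit referral to \cite{Book}, and then used as input in the proofs of Theorem \ref{BrodmannRotthaus} and the main theorem. So your proposal cannot be matched against an argument in the paper, only against the actual proof in \cite{Book}; and as a proof it does not stand on its own, since every substantive step (the descent from the completion, the fibration step, all positive-characteristic difficulties) is deferred back to \cite{Book}, which is what the paper already does by citing it.

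Moreover, one step of your sketch is wrong as stated, and it is the crucial one. After reducing to a complete local domain $A=V[[x_1,\dots,x_n]]/J$, you claim that ``applying approximation once more'' replaces the power series ring by a ring essentially of finite type over $V$, so that the problem becomes the geometric case covered by de Jong. No such reduction is available: Popescu/Artin approximation lets you descend finitely presented data along a \emph{regular} map from a henselian ring to its completion (that is the legitimate first reduction), but $V[[x_1,\dots,x_n]]/J$ is not a filtered colimit of finite type $V$-algebras along regular or even flat maps, and base-changing a regular alteration covering of a finite type approximation along a non-regular map destroys both the regularity of the total spaces and generic finiteness. If this reduction worked, Gabber's theorem would be an easy corollary of de Jong's; in reality the complete local case is the heart of the matter, and in \cite{Book} it is treated directly, by induction on dimension via a fibration in relative curves over $C[[x_1,\dots,x_{d-1}]]$ ($C$ a Cohen ring), using Epp's theorem, Temkin's stable modification theorem for relative curves, and resolution of log-regular schemes. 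Relatedly, attributing the ``covering'' form of the conclusion to the prime-to-$\ell$ machinery is a misattribution: coverings appear because the construction is local on the base (different points require different alterations), whereas the prime-to-$\ell$ refinement is a separate, stronger theorem needed for the cohomological applications and plays no role in the version quoted here.
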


Using the valuative criterion for proper maps and Gabber's theorem, we obtain the following corollary. That is the reason why the above theorem is called ``weak local uniformization theorem" for quasi-excellent schemes.

\begin{corollary}
Assume that $A$ is a quasi-excellent domain. Then there exists a finite field extension $K/Q(A)$ such that if $R$ is a valuation domain satisfying $Q(R)=K$ and $A \subset R$, then there exists a regular domain $B$ such that $A \subset B \subset R$ and $B$ is a finitely generated $A$-algebra.
\end{corollary}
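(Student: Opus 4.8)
The plan is to deduce the corollary from Theorem \ref{Gabberuniformization} by means of the valuative criterion of properness. Start by applying Theorem \ref{Gabberuniformization} to the quasi-excellent integral scheme $Y = \Spec A$: this produces a regular alteration covering $\{\phi_i : X_i \to Y\}_{i=1,\ldots,m}$, given concretely by a proper morphism $f : V \to Y$, a Zariski open covering $V = \bigcup_{i=1}^m V_i$, and morphisms $\psi_i : V_i \to X_i$ making diagram \eqref{diagram} commute, with each $X_i$ a regular integral scheme. Since each $\phi_i$ is dominant and generically finite of finite type, the function field $K_i := Q(\mathcal{O}(X_i))$ is a finite extension of $Q(A)$ (after passing to a dense affine open of $X_i$ through which the generic point of $V$ maps — more precisely, one should track which $X_i$ receives the generic point of $V$). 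The first task is therefore to produce a single finite extension $K/Q(A)$ that works; I would take $K$ to be a finite extension of $Q(A)$ containing (a copy of) the function field of whichever $X_{i_0}$ the generic point $\eta_V$ of $V$ maps into under $\psi_{i_0}$, or more safely the compositum of all the $K_i$.

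Next, given a valuation domain $R$ with $Q(R) = K$ and $A \subset R$, consider the composite $\Spec K \to \Spec A = Y$ (the generic point, included via $A \subset R \subset K$) and lift it along $R$: we get a morphism $\Spec R \to Y$ sending the generic point to $\eta_Y$ and the closed point somewhere in $Y$. Because $f : V \to Y$ is proper (hence satisfies the valuative criterion), and because the generic point of $\Spec R$ lifts to $\eta_V \in V$ (using that $K$ contains the function field of $V$, i.e. that the map $\Spec K \to V$ is defined — this is where the choice of $K$ above matters), the valuative criterion yields a unique morphism $g : \Spec R \to V$ over $Y$. Now the generic point of $\Spec R$ lands in $\eta_V$, which lies in some $V_{i_0}$; since $V_{i_0}$ is open in $V$ and $\Spec R$ is local with its generic point mapping into $V_{i_0}$, in fact $g$ factors through $V_{i_0}$ entirely (the preimage of the open $V_{i_0}$ is an open subset of $\Spec R$ containing the generic point, hence all of $\Spec R$). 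Composing with $\psi_{i_0} : V_{i_0} \to X_{i_0}$ gives a morphism $\Spec R \to X_{i_0}$ over $Y$, and by commutativity of \eqref{diagram} this sits over $\phi_{i_0} : X_{i_0} \to Y$.

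Finally, translate this back into rings. The morphism $\Spec R \to X_{i_0}$ hits a point $x \in X_{i_0}$; choose an affine open $\Spec B_0 \subseteq X_{i_0}$ containing $x$, where $B_0$ is a regular finitely generated $A$-algebra (finitely generated because $\phi_{i_0}$ is of finite type). The morphism corresponds to a ring map $B_0 \to R$ over $A$, and since this map sends $\eta_{\Spec R} \mapsto \eta_V \mapsto$ generic point, it is injective, so $A \subset B_0 \subset R$ after identifying $B_0$ with its image. Set $B := B_0$; it is regular (a localization of a regular ring, or already regular as an affine open of a regular scheme), finitely generated over $A$, and $A \subset B \subset R$, as required.

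The main obstacle I anticipate is the bookkeeping in the first and second steps: ensuring that the generic point of $Y$ genuinely lifts into $V$ \emph{and} that the resulting function field embedding is compatible with the prescribed extension $K/Q(A)$, so that the valuative criterion applies with the \emph{given} valuation ring $R$ rather than some extension of it. In other words, the delicate point is choosing $K$ correctly up front so that for \emph{every} valuation domain $R$ with fraction field $K$ containing $A$, the map $\Spec K \to V$ over $Y$ exists; this forces $K$ to contain the function field of (the relevant component of) $V$, which is where one uses that $f$ is dominant and generically finite. Once that compatibility is arranged, the rest is a routine application of the valuative criterion of properness together with the openness of the $V_i$.
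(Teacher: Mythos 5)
Your overall strategy is exactly the one the paper indicates (the paper gives no written proof, only the remark that the corollary follows from Theorem \ref{Gabberuniformization} and the valuative criterion for proper maps), but two steps in your write-up are not correct as stated.

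First, the choice of $K$. To apply the valuative criterion to the proper map $f:V \to Y=\Spec A$ you must first produce a $Y$-morphism $\Spec K \to V$, and for that $K$ has to contain the residue field of some point $v$ of the (nonempty, finite) fibre $f^{-1}(\eta_Y)$ --- equivalently the function field of an irreducible component of $V$ dominating $Y$; this field is finite over $Q(A)$ precisely because $f$ is generically finite, and it exists because a proper dominant morphism is surjective. The field you actually construct, the compositum of the function fields $K_i$ of the $X_i$, does not do the job: the maps $\psi_i:V_i\to X_i$ are only generically finite, so the function field of the relevant component of $V$ is a finite, possibly strict, extension of $K_{i_0}$ and need not embed in your compositum. (Note also that $V$ need not be irreducible, so ``the generic point $\eta_V$'' is not defined; one should simply fix a point $v\in f^{-1}(\eta_Y)$ and demand $k(v)\subset K$.) You acknowledge this tension in your final paragraph, but the construction itself should be stated with the field of (a component of) $V$, not of the $X_i$.

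Second, the factorization of $g:\Spec R \to V$ through one chart is argued incorrectly, and with your choice of chart it can actually fail. An open subset of $\Spec R$ containing the generic point need not be all of $\Spec R$ (for a discrete valuation ring, $\{(0)\}$ is such an open), so ``$g^{-1}(V_{i_0})$ is open and contains the generic point, hence equals $\Spec R$'' is false; moreover, an index $i_0$ chosen so that $V_{i_0}$ contains the image of the \emph{generic} point need not have $V_{i_0}$ containing the image of the closed point. The correct argument runs through the closed point: choose $i_0$ with $g(\mathfrak{m}_R)\in V_{i_0}$; since every point of $\Spec R$ specializes to the closed point, each $g(x)$ is a generalization of $g(\mathfrak{m}_R)$, and the open set $V_{i_0}$ is stable under generalization, so $g(\Spec R)\subset V_{i_0}$. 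The same remark is needed (and missing) when you factor $\Spec R \to X_{i_0}$ through an affine open $\Spec B_0$: take $\Spec B_0$ containing the image of the closed point. With these corrections the rest of your argument is fine: the generic point of $\Spec R$ lands on the generic point of the integral scheme $X_{i_0}$ (the generic fibre of the generically finite dominant $\phi_{i_0}$ consists of $\eta_{X_{i_0}}$ alone), so $B_0\to R$ is injective, and $B:=B_0$ is regular, finitely generated over $A$, with $A\subset B\subset R$.
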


\section{A generalization of a theorem of Brodmann and Rotthaus}
\label{quasi-excellent3}

The purpose of this section is to prove the following theorem. It was proved by Brodmann and Rotthaus \cite{BroRot80} for rings containing a field of characteristic zero, using the resolution of singularities by Hironaka. Let $\rad(A)$ be the Jacobson radical of a ring $A$.

\begin{theorem}
\label{BrodmannRotthaus}
Let $A$ be a Noetherian ring with an ideal $I \subset \rad(A)$. Assume that $A/I$ is quasi-excellent and $A$ is a $G$-ring. Then $A$ is a $J_2$-ring. In other words, $A$ is quasi-excellent.
\end{theorem}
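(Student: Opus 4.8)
The plan is to show that the regular locus of an arbitrary finitely generated $A$-algebra $C$ is Zariski open, which by definition is exactly the $J_2$-property. Since the $G$-ring property and the quasi-excellence of $A/I$ both pass to finitely generated algebras (the latter because $C/IC$ is a finitely generated algebra over the quasi-excellent ring $A/I$) and since $IC$ is contained in $\rad(C)$ whenever $I \subset \rad(A)$, it suffices to prove that $\Reg(\Spec A)$ is open under the stated hypotheses. So from now on I would work directly with $X = \Spec A$. By a standard Nagata-style argument, to prove $\Reg(X)$ is open it is enough to show that for every irreducible closed subset $Z = V(\fp) \subset X$, if $Z \cap \Reg(X) \neq \emptyset$ then $Z \cap \Reg(X)$ contains a nonempty open subset of $Z$; equivalently, replacing $A$ by $A/\fp$ (which is again a $G$-ring, and whose quotient by the image of $I$ is again quasi-excellent), it suffices to treat the case where $A$ is a domain and the generic point is regular, and to produce $0 \neq f \in A$ with $A_f$ regular.

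Next I would bring in the given data modulo $I$. Since $A/I$ is quasi-excellent it is in particular $J_2$, so $\Reg(\Spec A/I)$ is open; write its complement as $V(J/I)$ for some ideal $J \supset I$. The key case to handle is when $A$ is a domain: I would apply Gabber's local uniformization, in the form of the Corollary following Theorem \ref{Gabberuniformization}, not to $A$ itself (which we do not yet know is quasi-excellent) but to a suitable quotient or localization that \emph{is} known to be quasi-excellent — namely something built from $A/I$, which is quasi-excellent. The mechanism, following Nishimura--Nishimura and the Brodmann--Rotthaus strategy, is: one produces, after base change along $A \to \widehat{A}^I$ or by approximating in the $I$-adic topology, a regular alteration covering of a closed subscheme, and then \emph{lifts} the resulting regular algebra through the completion using the fact that $A$ is a $G$-ring (so $A \to \widehat{A_\fp}$ is regular, hence regularity descends along it) together with Popescu/Néron desingularization-type approximation, or directly via the openness of regular loci already known downstairs combined with a flatness/fibre argument. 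Concretely: show that the non-regular locus $\Sing(\Spec A)$ is closed by checking it is closed after the faithfully flat base change $A \to \widehat{A}^I$, reducing to the complete ring where one can compare with $A/I$; here $I \subset \rad(A)$ is exactly what makes $V(I)$ meet every irreducible component of $\Spec \widehat{A}^I$, so information on $\Spec A/I$ propagates.

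I expect the main obstacle to be precisely the transfer of the local uniformization data across the $I$-adic completion: Gabber's theorem gives a regular alteration covering of the quasi-excellent scheme $\Spec A/I$, but we need a regular algebra over $A$ itself, and $A$ is not yet known to be quasi-excellent, so one cannot apply Gabber directly to $\Spec A$. The resolution is to run the argument on $\widehat{A}^I$ — or on $\Spec A/I$ and then thicken — and to use the hypothesis that $A$ is a $G$-ring to move regularity between $A$, its localizations, and their completions, invoking that a ring is $J_2$ once its completion-type base changes have open regular loci and the relevant fibres are geometrically regular. The bookkeeping of Zariski-openness (replacing $\Sing$ by a closed set defined by finitely many minors and checking its behaviour under generization, using catenarity-free arguments since we only claim $J_2$, not universal catenarity) is the routine but delicate part; the conceptual content is entirely in combining Theorem \ref{Gabberuniformization} with the $G$-ring hypothesis via faithfully flat descent along $A \to \widehat{A}^I$.
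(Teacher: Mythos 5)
There is a genuine gap, and it occurs in two places. First, your reduction is faulty: the hypothesis $I\subset\rad(A)$ does \emph{not} pass to arbitrary finitely generated $A$-algebras (for $C=A[x]$ with $A$ a domain one has $\rad(C)=(0)$, so $IC\not\subset\rad(C)$); it only passes to module-finite algebras, because only for those do maximal ideals contract to maximal ideals. This is why one reduces, via the $J_2$-criterion \cite[Theorem 73]{Ma80}, to showing that $\Reg(\Spec B)$ contains a nonempty open set for every \emph{finite} $A$-algebra domain $B$, and not to the single scheme $\Spec A$. Moreover your step ``replace $A$ by $A/\fp$'' is not an equivalence: Nagata's openness criterion asks that $\Reg(\Spec A)\cap V(\fp)$ contain a nonempty open subset of $V(\fp)$, i.e.\ that $A_\fq$ be regular for $\fq$ in an open set, and this is not the same as regularity of $(A/\fp)_\fq$; the passage to quotients and finite extension domains has to be organized through the finite-algebra criterion.

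Second, and more seriously, the mechanism you propose for the core of the argument does not produce a proof. Descending closedness of the singular locus along the faithfully flat regular map $A\to\widehat{A}^I$ only trades the problem for showing that $\widehat{A}^I$ (an $I$-adically complete ring whose quotient mod $I$ is quasi-excellent) and its finite algebras have closed singular loci --- but that is essentially the lifting problem (Theorem \ref{quasi-excellent}), whose proof uses Theorem \ref{BrodmannRotthaus}; so this route is circular, and the alternative appeal to Popescu/N\'eron-type approximation is only a gesture. The idea you are missing is that Gabber's theorem can be applied \emph{directly} on the $A$-side: for a finite $A$-algebra domain $B$ and any $\fq\in\Spec B$, the local ring $B_\fq$ is quasi-excellent because $B$ is a $G$-ring and semi-local $G$-rings are quasi-excellent, so Theorem \ref{Gabberuniformization} gives a regular alteration covering of $\Spec B_\fq$, which one spreads out to $\Spec B[b^{-1}]$. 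Using that $B/IB$ is $J_2$ (Lemma \ref{lemmaopenset}) one arranges, as in Lemma \ref{alterationcover}, that the covering is regular over $V(IB[b^{-1}])$; then, for $\fp$ in a suitable nonempty open set, one picks a maximal ideal $\fm\supset\fp$, uses $IB\subset\rad(B)$ to get $IB\subset\fm$, lifts the specialization $\fp\rightsquigarrow\fm$ through the alteration covering (Lemma \ref{specialization}), and concludes regularity of $B_\fp$ from regularity at the point over $\fm$, generization, and generic flatness. None of this chain --- in particular the use of quasi-excellence of the \emph{localizations} of $B$ and the specialization-lifting through a proper covering, which is exactly where $I\subset\rad(A)$ enters --- appears in your proposal, so as written it does not establish the theorem.
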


We need to prove a number of lemmas before proving this theorem. Let $X$ be a Noetherian scheme. We denote by
$\Reg(X)$ the regular locus of $X$,
and put ${\rm Sing}(X) = X \setminus \Reg(X)$.

\begin{lemma}
\label{lemmaopenset}
Let $A$ be a Noetherian ring with an ideal $I \subset A$ and
let $\pi:X \to \Spec A$ be a scheme map of finite type. Assume that $A/I$ is a $J_2$-ring. Then $\Reg(X) \cap \pi^{-1}\big(V(I)\big)$ is open in $\pi^{-1}\big(V(I)\big)$.
\end{lemma}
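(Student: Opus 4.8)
The plan is to reduce to the affine case and then apply the criterion recalled just before the statement: a subset of a Noetherian scheme is open if and only if it is constructible and stable under generalization. Since the assertion is local on $X$, I may assume $X=\Spec C$ with $C$ a finitely generated $A$-algebra; then $\pi^{-1}\big(V(I)\big)=\Spec(C/IC)$ with $C/IC$ a finitely generated $(A/I)$-algebra, and the set to be studied is $E:=\{\fp\in\Spec(C/IC):C_{\fp}\text{ is regular}\}$. Stability under generalization is immediate: if $\fp\in E$ and $\fq\subseteq\fp$ (both containing $IC$), then $C_{\fq}$ is a localization of the regular local ring $C_{\fp}$, hence regular. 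So everything comes down to proving that $E$ is constructible in $\Spec(C/IC)$.

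For constructibility I would use Chevalley's criterion: in a Noetherian space a subset is constructible if and only if, for every irreducible closed subset $Z$ with generic point $\eta$, the trace $E\cap Z$ either contains a nonempty open subset of $Z$ or is not dense in $Z$. If $\eta\notin E$, then for every $\fq\in Z$ the ring $C_{\eta}$ is a localization of $C_{\fq}$, so $C_{\fq}$ is not regular either; hence $E\cap Z=\emptyset$ and there is nothing to check. Thus the whole problem reduces to: \emph{given an irreducible closed $Z\subseteq\Spec(C/IC)$ whose generic point $\eta$ corresponds to a prime $\fp$ of $C$ with $C_{\fp}$ regular, produce a nonempty open subset of $Z$ contained in $E$}; in other words, spread out regularity of $C$ along the integral closed subscheme $Z\cong\Spec(C/\fp)$.

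Here is how I would do that. Put $d=\dim C_{\fp}$ and pick $a_{1},\dots,a_{d}\in\fp$ whose images form a regular system of parameters of $C_{\fp}$. Since $\fp$ is finitely generated and $(a_{1},\dots,a_{d})C_{\fp}=\fp C_{\fp}$, there is $g\notin\fp$ with $(a_{1},\dots,a_{d})C_{g}=\fp C_{g}$, so on $D(g)$ one has $C_{\fq}/(a_{1},\dots,a_{d})C_{\fq}=(C/\fp)_{\fq}$ for every $\fq\in Z\cap D(g)$. The key claim is that such a $C_{\fq}$ is regular as soon as (i) $a_{1},\dots,a_{d}$ is a $C_{\fq}$-regular sequence and (ii) $(C/\fp)_{\fq}$ is regular: granting (i) one gets $\dim C_{\fq}=\dim(C/\fp)_{\fq}+d$, while the images of $a_{1},\dots,a_{d}$ span the kernel of $\fm_{C_{\fq}}/\fm_{C_{\fq}}^{2}\twoheadrightarrow\fm_{(C/\fp)_{\fq}}/\fm_{(C/\fp)_{\fq}}^{2}$, whose target has dimension $\dim C_{\fq}-d$ by (ii), so the embedding dimension of $C_{\fq}$ is at most $d+(\dim C_{\fq}-d)=\dim C_{\fq}$ and $C_{\fq}$ is regular. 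Now condition (ii) cuts out $\Reg\big(\Spec(C/\fp)\big)$, which is open in $Z$ because $C/\fp$ is a finitely generated algebra over the $J_{2}$-ring $A/I$, and contains $\eta$ since $\Frac(C/\fp)$ is a field; condition (i), on the locus $V\big((a_{1},\dots,a_{d})\big)\supseteq Z\cap D(g)$ where the $a_{i}$ lie in the maximal ideal, is equivalent to the vanishing of the Koszul homology $H_{1}\big(K_{\bullet}(a_{1},\dots,a_{d};C_{\fq})\big)$ --- using that for elements of the maximal ideal of a Noetherian local ring the vanishing of $H_{1}$ of the Koszul complex already forces a regular sequence --- hence cuts out $V\big((a_{1},\dots,a_{d})\big)\setminus\Supp H_{1}\big(K_{\bullet}(a_{1},\dots,a_{d};C)\big)$, which is open (the Koszul homology being a finite $C$-module) and contains $\eta$ (since $(a_{1},\dots,a_{d})C_{\fp}$ is the maximal ideal of the regular local ring $C_{\fp}$). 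Intersecting these two loci with $Z\cap D(g)$ gives a nonempty open subset of $Z$ contained in $E$, which finishes constructibility; combined with generalization-stability, and after gluing over an affine cover of $X$, this proves the lemma.

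The main obstacle is conceptual rather than computational: $\Reg(X)\cap\pi^{-1}\big(V(I)\big)$ is in general strictly larger than $\Reg\big(\pi^{-1}(V(I))\big)$ --- a point of the fiber can be regular on $X$ yet singular on the fiber --- so one cannot merely feed the fiber into the $J_{2}$-property of $A/I$. The resolution is the observation above that spreading regularity of $C$ out along an integral subscheme $Z$ only requires the two auxiliary open loci: the regular locus of the honest finite-type $(A/I)$-algebra $C/\fp$, and the regular-sequence locus of a fixed system of parameters, which is open by Koszul theory. The two background facts worth isolating carefully are the implication ``(i) and (ii) $\Rightarrow$ $C_{\fq}$ regular'' (essentially: if a regular sequence in a Noetherian local ring has regular quotient, the ring is regular) and the openness of the regular-sequence locus via $H_{1}$ of the Koszul complex.
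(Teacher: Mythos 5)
Your proposal is correct and takes essentially the same route as the paper: openness is reduced to constructibility plus stability under generalization, and constructibility rests on the same generic-point spreading argument --- a regular sequence generating $\fp$ in a neighborhood of $\fp$, the $J_2$-openness of $\Reg\big(\Spec(C/\fp)\big)$ coming from $A/I$, and the lemma that a regular sequence with regular quotient forces regularity. The only differences are in packaging: you invoke the standard criterion for constructibility (the trace on each irreducible closed subset is either not dense or contains a nonempty open), whereas the paper runs the Noetherian induction by hand, and you justify the spreading-out of the regular-sequence condition explicitly via vanishing of Koszul $H_1$, a point the paper leaves implicit.
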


\begin{proof}
For the proof, we may assume that $X$ is an affine scheme.
Let $B$ be an $A$-algebra of finite type and put $X=\Spec B$.
First, let us prove the following claim:

\begin{claim}
\label{subclaim}
Assume that $Z \subset V(IB)$ is a closed subset. Then $\Reg(X) \cap Z$ is constructible in $Z$.
\end{claim}

\begin{proof}[Proof of the claim]
We prove it by Noetherian induction. So let us suppose that any proper closed subset of $Z$ satisfies the conclusion of the claim. We may assume that $Z=V(\fq)$ for some prime ideal $\fq \supset IB$. Since $A/I$ is a $J_2$-ring, we have
\begin{equation}
\label{open}
\mbox{$\Reg\big(\Spec(B/\fq)\big)$ is a non-empty open set in $\Spec(B/\fq)$.}
\end{equation}

Assume that $\fq \notin \Reg(X)$. Then we have $\Reg(X) \cap Z=\emptyset$ and this is evidently constructible. Next, assume that $\fq \in \Reg(X)$. Then since $B_{\fq}$ is a regular local ring, the maximal ideal $\fq B_{\fq}$ is generated by a regular sequence. This together with $(\ref{open})$ implies the following:
\begin{enumerate}
\item[-]
There exists an element $f \in B \setminus \fq$ such that $\fq B[f^{-1}]$ is generated by a $B[f^{-1}]$-regular sequence and $B[f^{-1}]/\fq B[f^{-1}]$ is a regular ring.
\end{enumerate}

Hence if $\fp \in \Spec B$ is taken such that $\fp \in V(\fq)$ and $\fp \notin V(\fq+fB)$, then $B_{\fp}$ is regular. We have the decomposition:
$$
\Reg(X) \cap V(\fq)=\Big(\Reg(X) \cap V(\fq+fB)\Big) \cup \Big(\Reg(X) \cap \big(V(\fq) \setminus V(\fq+fB)\big)\Big)
$$
By Noetherian induction hypothesis, we see that $\Reg(X) \cap V(\fq+fB)$ is constructible. On the other hand, we have $\Reg(X) \cap \big(V(\fq) \setminus V(\fq+fB)\big)=V(\fq) \setminus V(\fq+fB)$ and this is clearly open in $V(\fq)$. We conclude that $\Reg(X) \cap V(\fq)$ is constructible, which finishes the proof of the claim.
\end{proof}

We can finish the proof of Lemma~\ref{lemmaopenset} in the following way. It is easy to show that $\Reg(X) \cap \pi^{-1}\big(V(I)\big)$ is closed under taking generalizations of points inside $\pi^{-1}\big(V(I)\big)$.
Let us recall that a subset of a Noetherian scheme is open if and only if it is constructible and stable under generalization of points \cite[II. Ex. 3.17, 3.18]{Har77}. 
 Combining Claim \ref{subclaim}, we conclude that $\Reg(X) \cap \pi^{-1}\big(V(I)\big)$ is open in $\pi^{-1}\big(V(I)\big)$.
\end{proof}

Using this lemma, we can prove the following crucial fact.

\begin{lemma}
\label{alterationcover}
Let $B$ be a Noetherian domain and let us choose $\fq \in \Spec B$ and an ideal $J \subset B$. Assume that $B_{\fq}$ is a $G$-ring and $B/J$ is a $J_2$-ring. 
Then there exists $b \in B \setminus \fq$ together with an alteration covering $\{\phi_{b,i}:X_{b,i} \to \Spec B[b^{-1}]\}_i$ such that
\begin{equation}
\label{openset}
 \phi_{b,i}^{-1}\biggl(\Spec \frac{B[b^{-1}]}{J B[b^{-1}]}\biggl) \subset 
 \Reg(X_{b,i})
\end{equation}
for all $i$.
\end{lemma}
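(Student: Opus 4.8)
The plan is to reduce to Gabber's local uniformization theorem (Theorem~\ref{Gabberuniformization}) applied to a suitable integral scheme living over $V(J)$. First I would localize: since $B_{\fq}$ is a $G$-ring and $B/J$ is a $J_2$-ring, and $G$-rings and $J_2$-rings are stable under localization and homomorphic image, I want to produce a single element $b \in B \setminus \fq$ controlling the geometry near $\fq$. The key point is that the hypothesis $B_{\fq}$ is a $G$-ring forces the formal fibres of $B$ to be geometrically regular in a Zariski neighborhood of $\fq$; more precisely, one knows (by a standard openness argument, using that $B$ is Noetherian and the $G$-locus is constructible and generically large) that there is $b_0 \in B \setminus \fq$ such that $B[b_0^{-1}]$ is a $G$-ring. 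Thus after replacing $B$ by $B[b_0^{-1}]$ I may assume $B$ itself is a $G$-ring, while keeping $B/J$ a $J_2$-ring.

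Next I would invoke Gabber's theorem. The subtlety is that Gabber's theorem produces a \emph{regular} alteration covering of an integral quasi-excellent scheme, whereas here $\Spec B$ need not be quasi-excellent; only $B$ is a $G$-ring. The fix is to apply the theorem not to $\Spec B$ but to the closed subscheme structure it induces over $V(J)$, or rather to run Gabber's construction relative to the base: one takes the quasi-excellent quotient $B/J$ (which, being a $J_2$-ring and—after shrinking—a $G$-ring, hence quasi-excellent by the remark that semi-local $G$-rings are $J_2$, or more carefully by shrinking $\Spec(B/J)$ to an affine open that is quasi-excellent), applies Theorem~\ref{Gabberuniformization} to each of its integral components to get a regular alteration covering, and then lifts/extends this covering over the thickening $\Spec B \supset \Spec(B/J)$. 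Concretely: given the proper map $f : V \to \Spec(B/J)$ and the open cover $V = \bigcup V_i$ with $\psi_i : V_i \to X_i$ witnessing the regular alteration covering of $\Spec(B/J)$, one spreads $V$, the $V_i$, the $X_i$ and the maps out to finite-type schemes over $\Spec B[b^{-1}]$ for some further $b \in B\setminus\fq$ (using that all these data are of finite type and defined by finitely many equations), obtaining candidate $X_{b,i} \to \Spec B[b^{-1}]$ together with a proper $V \to \Spec B[b^{-1}]$ and the open cover, so that the diagram \eqref{diagram} commutes after this spreading-out.

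It then remains to verify the regularity inclusion \eqref{openset}, i.e.\ that $\phi_{b,i}^{-1}\bigl(\Spec(B[b^{-1}]/JB[b^{-1}])\bigr) \subset \Reg(X_{b,i})$. The set $\Reg(X_{b,i}) \cap \phi_{b,i}^{-1}(V(JB[b^{-1}]))$ is open in $\phi_{b,i}^{-1}(V(JB[b^{-1}]))$ by Lemma~\ref{lemmaopenset} (applied with the $J_2$-ring $B[b^{-1}]/JB[b^{-1}]$ as the base quotient and $\pi = \phi_{b,i}$), and it contains the whole fibre over the generic points of the components of $V(JB[b^{-1}])$ because the spread-out $X_{b,i}$ restricts over $\Spec(B/J)$ to the genuinely regular scheme $X_i$ from Gabber's theorem. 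Since an open set containing all generic points of a scheme, together with constructibility, forces the set to be everything, shrinking $b$ once more (to absorb finitely many closed conditions) gives \eqref{openset} on the nose. The main obstacle, and the place where care is genuinely needed, is the spreading-out step: one must check that the properness of $f$, the fact that the $V_i$ cover $V$, and the regularity of the $X_i$ all persist after passing from the quotient $B/J$ to a localization $B[b^{-1}]$ of the thickening—properness and the covering property spread out by standard limit arguments (\cite[EGA IV]{Gro}-type results), and regularity of the finitely many fibres in question is handled exactly by Lemma~\ref{lemmaopenset} as above.
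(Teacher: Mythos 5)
There is a genuine gap, in fact two. First, your opening reduction is unjustified: there is no ``standard openness argument'' producing $b_0 \in B\setminus\fq$ with $B[b_0^{-1}]$ a $G$-ring. The $G$-locus of a Noetherian ring is not known to be (and in general is not) open, and this kind of openness is essentially the difficulty the whole paper is fighting; you cannot simply assume, after shrinking, that $B$ itself is a $G$-ring, nor that $B/J$ becomes a $G$-ring. The object that genuinely is quasi-excellent is $B_\fq$ itself (a local $G$-ring is quasi-excellent, as recalled in \S~\ref{quasi-excellent1}), so the correct move is to apply Theorem \ref{Gabberuniformization} to $\Spec B_\fq$ and then spread the regular alteration covering out along the filtered limit $B_\fq=\varinjlim_{b\notin\fq}B[b^{-1}]$ to some $\Spec B[\tilde b^{-1}]$. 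Your alternative --- applying Gabber to $\Spec(B/J)$ and ``lifting/extending the covering over the thickening $\Spec B$'' --- is not a valid operation: spreading out works along localizations, not from a closed subscheme to its ambient scheme, and even if the maps could be extended, nothing would control properness, the covering property, or (most seriously) regularity of the extended total spaces.

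Second, the verification of \eqref{openset} rests on a false principle: an open (or constructible) subset of $\phi_{b,i}^{-1}\bigl(V(JB[b^{-1}])\bigr)$ containing everything over the generic points of $V(JB[b^{-1}])$ need not be the whole set, and inverting one further element of $B$ cannot ``absorb'' a closed subset of $V(J)$ unless one knows that subset avoids the relevant points. The actual mechanism is a specialization argument tied to $\fq$: since the covering over $B[\tilde b^{-1}]$ restricts over $\Spec B_\fq$ to the regular schemes $X_i$, every point of $X_{\tilde b,i}$ mapping to a generalization of $\fq$ is a regular point; the bad locus $W_{\tilde b,i}=\phi_{\tilde b,i}^{-1}(Z_{\tilde b})\setminus\Reg(X_{\tilde b,i})$ is closed by Lemma \ref{lemmaopenset}, its image is constructible by Chevalley's theorem, and the regularity over generalizations of $\fq$ shows $\fq\notin\overline{\phi_{\tilde b,i}(W_{\tilde b,i})}$; one then picks $b$ with $\fq\in\Spec B[b^{-1}]$ disjoint from these closures, which makes $W_{b,i}=\emptyset$ and gives \eqref{openset} exactly. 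Your sketch invokes Lemma \ref{lemmaopenset} and constructibility, but without the reduction to $\Spec B_\fq$ and this closure-avoidance step the conclusion does not follow.
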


\begin{proof}
Since $B_{\fq}$ is a quasi-excellent local domain by \cite[Theorem 76]{Ma80}, there exists a regular alteration covering $\{ \phi_i:X_i \to \Spec B_{\fq} \}_{i = 1, \ldots, m}$, together with a proper map $f: V \to \Spec B_{\fq}$ with $V:=\bigcup_{i=1}^m V_i$ and $\{\psi_i:V_i \to X_i \}_i$ by Theorem \ref{Gabberuniformization}. 
By Chow's lemma, we may assume that $f:V \to \Spec B_{\fq}$ is projective. 
Then we can find an element $\tilde{b} \in B \setminus \fq$ and an alteration covering $\{\phi_{\tilde{b},i}:X_{\tilde{b},i} \to \Spec B[\tilde{b}^{-1}]\}_{i = 1, \ldots, m}$ such that $\phi_i=\phi_{\tilde{b},i} \otimes_{B[\tilde{b}^{-1}]} B_{\fq}$. 

Remark that, if $\phi_{\tilde{b},i}(s)$ is a generalization of $\fq B[\tilde{b}^{-1}]$, then we have $s \in X_i$. In particular, 
\begin{equation}\label{reg}
\mbox{$\mathcal{O}_{X_{\tilde{b},i},s}$ is regular. }
\end{equation}

Let us put
$$
\mbox{$Z_{\tilde{b}}:=\Spec \biggl(\dfrac{B[\tilde{b}^{-1}]}{J B[\tilde{b}^{-1}]}\biggl)$, which is a closed subset of $\Spec B[\tilde{b}^{-1}]$.}
$$
Then we find that
$$ 
W_{\tilde{b},i}:=\phi^{-1}_{\tilde{b},i}(Z_{\tilde{b}}) \setminus \Reg(X_{\tilde{b},i})~\mbox{is closed in}~\phi^{-1}_{\tilde{b},i}(Z_{\tilde{b}})
$$
by Lemma \ref{lemmaopenset}. Hence $W_{\tilde{b},i}$ is a closed subset of $X_{\tilde{b},i}$ and thus, $\phi_{\tilde{b},i}(W_{\tilde{b},i})$ is a constructible subset of $\Spec B[\tilde{b}^{-1}]$ by Chevalley's theorem (cf. \cite[Theorem 6]{Ma80}). 
From this, it follows that the Zariski closure $\overline{\phi_{\tilde{b},i}(W_{\tilde{b},i})}$ of $\phi_{\tilde{b},i}(W_{\tilde{b},i})$ is the set of all points of $\Spec B[\tilde{b}^{-1}]$ that are obtained as a specialization of a point of $\phi_{\tilde{b},i}(W_{\tilde{b},i})$.

Assume that we have $\fq \in \overline{\phi_{\tilde{b},i}(W_{\tilde{b},i})}$. Then there is a point $s \in W_{\tilde{b},i}$ such that $\fq \in \overline{\{\phi_{\tilde{b},i}(s)\}}$. By (\ref{reg}), the local ring $\mathcal{O}_{X_{\tilde{b},i},s}$ is regular. Hence $s \in \Reg(X_{\tilde{b},i})$, which is a contradiction to $s \in W_{\tilde{b},i}$. Thus, we must get $\fq \notin \overline{\phi_{\tilde{b},i}(W_{\tilde{b},i})}$. 

Let us choose an element $0 \ne b \in B$ such that $\fq \in \Spec B[b^{-1}] \subset \Spec B[\tilde{b}^{-1}]$ and $\Spec B[b^{-1}] \cap \overline{\phi_{\tilde{b},i}(W_{\tilde{b},i})}=\emptyset$ for all $i$. Consider the fiber square:
\[
\begin{array}{ccccc}
X_{b,i} & \stackrel{\phi_{b,i}}{\longrightarrow} & \Spec B[b^{-1}] &\\
\hphantom{\scriptstyle \psi_i}\downarrow{\scriptstyle} &&  \hphantom{\scriptstyle \psi_i}\downarrow{\scriptstyle}
& & \\
X_{\tilde{b},i} & \stackrel{\phi_{\tilde{b},i}}{\longrightarrow} & \Spec B[\tilde{b}^{-1}]\\
\end{array}
\]
Then we have $W_{\tilde{b},i} \cap X_{b,i}=\emptyset$. Let us put
$$
Z_b:=\Spec \biggl(\frac{B[b^{-1}]}{JB[b^{-1}]}\biggl)~\mbox{and}~W_{b,i}:=\phi^{-1}_{b,i}(Z_b) \setminus \Reg(X_{b,i}).
$$
Since $Z_b=Z_{\tilde{b}} \cap \Spec B[b^{-1}]$, we get
$$
W_{b,i}=\big(\phi^{-1}_{\tilde{b},i}(Z_{\tilde{b}}) \cap X_{b,i}\big) \setminus \Reg(X_{b,i})=\big(\phi^{-1}_{\tilde{b},i}(Z_{\tilde{b}}) \cap X_{b,i}\big) \setminus \Reg(X_{\tilde{b},i})=W_{\tilde{b},i} \cap X_{b,i}=\emptyset.
$$
This proves the assertion (\ref{openset}).
\end{proof}

\begin{lemma}
\label{specialization}
Let $\{ \phi_i :X_i \to Y \}_{i = 1, \ldots, m}$ be an alteration covering and let $y_1,\ldots,y_l$ be a sequence of points in $Y$ such that $y_{j+1} \in \overline{\{y_j\}}$ for $j=1,\ldots,l-1$, where $\overline{\{y_j\}}$ denotes the Zariski closure of $\{ y_j \}$ in $Y$. Then there exist $i$ and a sequence of points $x_1,\ldots,x_l$ in $X_i$ such that $\phi_i(x_j)=y_j$ for $j = 1, \ldots, l$ and $x_{j+1} \in \overline{\{x_j\}}$ for $j=1,\ldots,l-1$.
\end{lemma}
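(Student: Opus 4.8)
The plan is to reduce the statement to the lifting criterion for proper morphisms. Recall that the defining data of an alteration covering $\{\phi_i:X_i\to Y\}$ is a proper morphism $f:V\to Y$, an open cover $V=\bigcup_{i=1}^m V_i$, and maps $\psi_i:V_i\to X_i$ with $\phi_i\circ\psi_i = f|_{V_i}$. The key observation is that a chain of specializations $y_1,\ldots,y_l$ in $Y$ can be encoded as a morphism from the spectrum of a suitable ring: first I would reduce to the case $l=2$ is not enough on its own, so instead I will work directly with the chain. Pick a point $v_1\in V$ with $f(v_1)=y_1$ (possible since $f$ is surjective, being proper and dominant onto the integral scheme $Y$; in fact one can even take $v_1$ to be a generic point lying over the generic point of $Y$).

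Next I would lift the chain $y_1,\ldots,y_l$ along $f$. The standard tool is the following: if $f:V\to Y$ is proper (hence closed) and $v\in V$ lies over $y$, and $y'\in\overline{\{y\}}$, then $y'$ lies in $f(\overline{\{v\}})$ because $f(\overline{\{v\}})$ is a closed subset of $Y$ containing $y$, hence containing $\overline{\{y\}}\ni y'$; so there is $v'\in\overline{\{v\}}$ with $f(v')=y'$. Applying this inductively, starting from $v_1$ over $y_1$, I obtain a chain $v_1,\ldots,v_l$ in $V$ with $v_{j+1}\in\overline{\{v_j\}}$ and $f(v_j)=y_j$ for all $j$.

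Now comes the point where I must pass from $V$ to one of the $X_i$: the chain $v_1,\ldots,v_l$ lies in $V=\bigcup V_i$, but a priori different $v_j$ could lie in different members of the cover. However, since $v_{j+1}\in\overline{\{v_j\}}$, once $v_1$ lies in some open $V_i$, it need not be that the whole chain does — the specializations go the wrong way for an open set. I would fix this by choosing $v_1$ more cleverly: since the maps in an alteration covering are dominant and the schemes integral, I can arrange $v_1$ to be a generic point of $V$ (or of an irreducible component of $V$ dominating $Y$), but that still does not force all specializations into one $V_i$. The honest fix is to run the lifting the other direction or to localize: replace $V$ by the local scheme at $v_l$ and observe the chain $v_1,\ldots,v_l$ all lies in $\Spec\mathcal{O}_{V,v_l}$; but $\Spec\mathcal{O}_{V,v_l}$ need not sit inside a single $V_i$ either. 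So the cleanest route is: pick $i$ such that $v_l\in V_i$, and then \emph{re-lift} the chain to start at a point over $v_l$... which again reverses things. I will instead argue as follows. By the valuative criterion of properness applied to a chain of DVRs — or more simply, by choosing $v_1$ among the finitely many generic points of $V$ — one reduces to: there is an irreducible closed $T\subset V$ whose image in $Y$ contains the chain, with generic point $t_1$ over $y_1$. Take $i$ with $t_1\in V_i$; then $V_i\cap T$ is a nonempty open in $T$, so it contains... again the wrong points.

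The genuinely correct and standard argument, which I would present, is this: choose $v_1$ over $y_1$; since $v_1$ has only finitely many specializations that are "relevant", actually the right statement uses that an open cover of a spectrum of a local ring — no. Let me state the real plan: I would prove the $l=2$ case by taking $v\in V$ over $y_1$, letting $v'\in\overline{\{v\}}$ over $y_2$ as above, choosing $i$ with $v'\in V_i$, and then noting $v'\in V_i$ implies — since $V_i$ is open and $v\rightsquigarrow v'$ — that $v$ need not be in $V_i$, so this still fails, which shows the $l=2$ reduction must instead lift \emph{up}: given $v'$ over $y_2$ in $V_i$, is there $v\in V_i$ over $y_1$ with $v\rightsquigarrow v'$? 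Yes: take any $v\in V$ over $y_1$ with $v\rightsquigarrow$ some point over $y_2$, intersect with the generization-stable... Actually $V_i$ being open \emph{is} stable under generization, so if $v'\in V_i$ and $v\rightsquigarrow v'$ then $v\in V_i$ automatically. Hence the correct order is: choose $v_l\in V$ over $y_l$ with the property that $v_l$ is a specialization of a point over $y_1$ (using that $f$ is closed and dominant, so $y_l\in\overline{\{\eta_V\}}$'s image chains down), fix $i$ with $v_l\in V_i$, lift the chain \emph{upward} $v_l\leftsquigarrow v_{l-1}\leftsquigarrow\cdots\leftsquigarrow v_1$ staying inside the open set $V_i$ by generization-stability and closedness of $f$, then set $x_j:=\psi_i(v_j)\in X_i$, giving $\phi_i(x_j)=f(v_j)=y_j$ and $x_{j+1}\in\overline{\{x_j\}}$ since $\psi_i$ is a morphism of schemes and continuous. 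The main obstacle is exactly arranging the whole chain inside a single $V_i$, and the resolution is the asymmetry that open sets are closed under generization, so one must anchor the construction at the \emph{most special} point $y_l$ and lift generizations, using properness of $f$ (closedness plus surjectivity onto the integral scheme $Y$) to guarantee that such liftings exist at each step.
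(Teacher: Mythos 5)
The core of your proposal is the paper's own argument: lift the chain $y_1,\ldots,y_l$ to a chain $v_1,\ldots,v_l$ in $V$ using the fact that specializations lift along the closed (proper) map $f$ --- your second paragraph carries this out correctly --- then choose $i$ with $v_l\in V_i$, note that the open set $V_i$ is stable under generization so that the entire chain lies in $V_i$, and push it forward by $\psi_i$ to get $x_j:=\psi_i(v_j)$. Stripped of the false starts, that is exactly the proof of Lemma \ref{specialization}.

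However, the ``correct order'' you settle on at the end substitutes a mechanism that is not valid as stated: you propose to anchor at a point $v_l$ over $y_l$ and then ``lift the chain upward \ldots using \ldots closedness of $f$ to guarantee that such liftings exist at each step.'' Closed (even proper surjective) morphisms lift \emph{specializations}, not generizations. For instance, let $Y=\Spec R$ with $R$ a discrete valuation ring and let $V$ be the disjoint union of $\Spec R$ and $\Spec$ of the residue field, with $f$ the obvious finite (hence proper) surjection; the point of the second component lies over the closed point of $Y$ but has no generization in $V$ lying over the generic point. So the step-by-step upward lifting you invoke can fail. The fix is that no upward lifting is needed at all: keep the chain $v_1,\ldots,v_l$ you already constructed downward from $v_1$, and only at the end choose $i$ with $v_l\in V_i$; since each $v_j$ is a generization of $v_l$ and $V_i$ is open, the whole chain automatically lies in $V_i$, and $\psi_i$ (being continuous) sends it to the required chain in $X_i$. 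With that correction --- deleting the ``lift upward by closedness'' step and retaining your paragraph-two construction --- your argument is complete and coincides with the paper's.
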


\begin{proof}
By assumption, for $i = 1,\ldots,m$, there is a commutative diagram:
\[
\begin{array}{ccccc}
V_i & {\longrightarrow} & V &\\
\hphantom{\scriptstyle \psi_i}\downarrow{\scriptstyle \psi_i} &&  \hphantom{\scriptstyle f}\downarrow{\scriptstyle f}
& & \\
X_i & \stackrel{\phi_i}{\longrightarrow} & Y\\
\end{array}
\]
where $V=\bigcup_{i=1}^m V_i$ is a Zariski open covering and $V \xrightarrow{f} Y$ is a proper surjective map. 
Let us find a sequence $v_1,\ldots,v_l$ in $V$ such that $v_j$ maps to $y_j$ for $j = 1, \ldots, l$ and $v_{j+1} \in \overline{\{v_j\}}$
for $j = 1, \ldots, l-1$. 
First, lift $y_1$ to a point $v_1 \in V$ via $f$. 
Suppose that a sequence $v_1,\ldots,v_t$ in $V$ has been found such that $f(v_j) = y_j$ for $j = 1, \ldots, t$ and $v_{j+1} \in \overline{\{v_j\}}$ for $j=1,\ldots,t-1$. 
So let us find $v_{t+1} \in V$ with the required condition. 
Since $f$ is a proper map, $f(\overline{\{v_t\}})$ is equal to $\overline{\{y_t\}}$. Hence $y_{t+1} \in f(\overline{\{v_t\}})$ and there is a lift $v_{t+1}$ of $y_{t+1}$ such that $v_{t+1} \in \overline{\{v_t\}}$. 
Here, we have $v_l \in V_i$ for some $i$. 
Since $V_i$ is closed under generalizations, $v_1,\ldots,v_l$ are contained in $V_i$. Then we see that the sequence $x_1:=\psi_i(v_1),\ldots,x_l:=\psi_i(v_l)$ in $X_i$ satisfies the required conditions.
\end{proof}

\begin{proof}[Proof of Theorem \ref{BrodmannRotthaus}]
In order to show that $A$ is a $J_2$-ring, it is enough to prove that 
the regular locus of any finite $A$-algebra is open (cf. \cite[Theorem73]{Ma80}). Let $B$ be a Noetherian domain that is finitely generated as an $A$-module. It suffices to prove that $\Reg(B)$ contains a non-empty open subset of $\Spec B$ by Nagata's topological criterion (cf. \cite[Theorem 24.4]{Ma86}). Since $A \to B$ is module-finite, we have $IB \subset \rad(B)$ and $B/IB$ is quasi-excellent. Since $B$ is a $G$-ring, $B_{\fq}$ is quasi-excellent for any $\fq \in \Spec B$
by \cite[Theorem 76]{Ma80}. By Lemma \ref{alterationcover}, there exists $b_{\fq} \in B \setminus \fq$ and an alteration covering $\{\phi_{b_{\fq},i}:X_{b_{\fq},i} \to \Spec B[b_{\fq}^{-1}]\}_i$ such that
\begin{equation}
\label{openset2}
\phi_{b_{\fq},i}^{-1}\biggl(\Spec \frac{B[b_{\fq}^{-1}]}{IB[b_{\fq}^{-1}]}\biggl) \subset 
\Reg(X_{b_{\fq},i})~\mbox{for all}~i.
\end{equation}
There exists a family of elements $b_1,\ldots,b_s \in \{ b_{\fq} \mid \fq \in \Spec B \}$ such that
\begin{equation}
\label{opencovering}
\Spec B=\Spec B[b_1^{-1}] \cup \cdots \cup \Spec B[b_s^{-1}].
\end{equation}
There exists a finite family of alteration coverings $\{\phi_{b_j,i}:X_{b_j,i} \to \Spec B[b_j^{-1}]\}_{i}$ with the same property as $(\ref{openset2})$ by letting $b_j=b_{\fq}$. 
By the lemma of generic flatness (cf. \cite[(22.A)]{Ma80}), we can find an element $0 \ne c \in B$ such that the induced map
\begin{equation*}
\phi_{b_j,i}^{-1}\big(\Spec B[b_j^{-1}c^{-1}]\big) \to \Spec B[b_j^{-1}c^{-1}]~\mbox{is flat for all}~i~\mbox{and}~j.
\end{equation*}

If we can show that $B[c^{-1}]$ is regular, then the proof is finished.

Let us pick $\fp \in \Spec B$ such that $c \notin \fp$. Then we want to prove that $B_{\fp}$ is regular. Let us choose a maximal ideal $\fm  \in \Spec B$ such that $\fp \subset \fm$. Since $IB \subset \rad(B)$, it follows that $IB \subset \fm$. 
By (\ref{opencovering}), there exists $j$ such that $\fp, \fm \in \Spec B[b_j^{-1}]$.
By Lemma~\ref{specialization}, there exist $x_1,x_2 \in X_{b_j,i}$ for some $i$ such that $x_2 \in \overline{\{x_1\}}$, $\phi_{b_j,i}(x_1)=\fp$ and $\phi_{b_j,i}(x_2)=\fm$. 
By (\ref{openset2}) together with the fact $IB \subset \fm$, the local ring $\mathcal{O}_{X_{b_j,i},x_2}$ is regular. Since $x_1$ is a generalization of $x_2$, $\mathcal{O}_{X_{b_j,i},x_1}$ is also a regular local ring. Here, $B_{\fp} \to \mathcal{O}_{X_{b_j,i},x_1}$ is flat, as $c \notin \fp$. Therefore, $B_{\fp}$ is regular by \cite[Proposition~17.3.3 (i)]{EGA20}.
\end{proof}

\section{Lifting problem for quasi-excellent rings}
\label{quasi-excellent4}

In this section, we shall prove the main theorem:

\begin{theorem}[Nishimura-Nishimura, Gabber]
\label{quasi-excellent}
Let $A$ be a Noetherian ring, and $I$ an ideal of $A$.
Assume that $A$ is $I$-adically complete.
Then, if $A/I$ is quasi-excellent, so is $A$.
\end{theorem}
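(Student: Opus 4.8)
The plan is to deduce the theorem from Theorem~\ref{BrodmannRotthaus} together with the assertion that $A$ is a $G$-ring, and to prove the latter by induction on $\dim A$, following the arguments of \cite{Nishimura87} --- the place where those arguments invoked resolution of singularities now being supplied by Theorem~\ref{BrodmannRotthaus} --- with Rotthaus's lifting property for $G$-rings \cite{Rott79} and Marot's lifting property for Nagata rings \cite{Ma75} as the remaining external inputs. Since $A$ is $I$-adically complete it is $I$-adically separated, so for $x \in I$ the series $\sum_{n \ge 0} x^n$ converges to an inverse of $1-x$; hence $I \subseteq \rad(A)$. By Theorem~\ref{BrodmannRotthaus} it therefore suffices to prove that $A$ is a $G$-ring.

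\textbf{Step 2 (reduction to domains and set-up of the induction).} A quotient of a $G$-ring is a $G$-ring, and conversely $A$ is a $G$-ring as soon as $A/\fp$ is a $G$-ring for every $\fp \in \Spec A$: for $\fp \subseteq \fq$ the formal fibre of $A_\fq$ over $\fp$ is canonically the generic formal fibre of $(A/\fp)_\fq$, so controlling all quotients controls all formal fibres. As every prime contains a minimal one and quotients of $G$-rings are $G$-rings, it is enough to treat the rings $A/\fp$ with $\fp$ minimal; each such quotient is a Noetherian domain, is $\bar I$-adically complete for $\bar I = (I+\fp)/\fp$, and has $(A/\fp)/\bar I = A/(I+\fp)$ quasi-excellent. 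Hence I may assume $A$ is a domain and argue by induction on $d = \dim A$: if $d = 0$ then $A$ is a field and if $I = 0$ then $A = A/I$ is quasi-excellent, so from now on $d \ge 1$, $I \ne 0$, and the statement is known for all rings of dimension $< d$.

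\textbf{Step 3 (reduction to a principal ideal; the non-generic formal fibres).} Pick $0 \ne a \in I$. Since $A$ is $I$-adically complete, a standard inverse-limit argument (Krull's intersection theorem applied in each $A/I^{k}$, in which the image of $aA$ lies in the Jacobson radical) shows that $A$ is $aA$-adically complete; moreover $a$ is a nonzerodivisor, $\dim A/aA \le d-1 < d$, and $(A/aA)/(I/aA) = A/I$ is quasi-excellent, so $A/aA$ is quasi-excellent by the inductive hypothesis. Likewise, for each nonzero prime $\fq$, the domain $A/\fq$ has dimension $< d$, is complete for the image of $aA$ (or, when $a \in \fq$, is simply a quotient of the quasi-excellent ring $A/aA$), and has quasi-excellent quotient by that image, so $A/\fq$ is quasi-excellent by induction, in particular a $G$-ring. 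By the criterion of Step~2 this means that every formal fibre of every localization of $A$ \emph{over a nonzero prime} is geometrically regular. Invoking the classical reduction of the $G$-ring property of a Noetherian local ring to its quotients of dimension one (see \cite[\S 32]{Ma86}), it remains only to prove that the \emph{generic} formal fibre $\widehat{A_\fm} \otimes_{A_\fm} Q(A)$ is geometrically regular over $Q(A)$ for every maximal ideal $\fm$ with $\Ht \fm = 1$; for such $\fm$ the ring $A_\fm$ is a one-dimensional local domain.

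\textbf{Step 4 (the generic formal fibres --- the main obstacle).} This is the heart of the matter. By Marot's theorem \cite{Ma75}, $A$ is a Nagata ring ($A/I$ being quasi-excellent, hence Nagata), so each $A_\fm$ is analytically unramified; thus $\widehat{A_\fm}$ is reduced and the generic formal fibre is a finite product of fields, and the whole difficulty is their separability over $Q(A)$ when $\operatorname{char} Q(A) = p > 0$ (in characteristic $0$ we are done). For $\Ht \fm = 1$ the $a$-adic and the $\fm$-adic topologies on $A_\fm$ coincide ($a$ being a nonzerodivisor and $\fm$ minimal over $aA$), one has $\widehat{A_\fm}/a\widehat{A_\fm} = (A/aA)_\fm$, which is quasi-excellent, and $a$ is a nonzerodivisor on $\widehat{A_\fm}$; the aim is to propagate geometric regularity from this mod-$a$ fibre to the generic formal fibre by exploiting the $aA$-adic completeness of the \emph{global} ring $A$ together with Rotthaus's lifting property for $G$-rings \cite{Rott79}, which applies outright when $A$ is semilocal. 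The delicate point --- which the intermediate lemmas of \S\ref{quasi-excellent4} must carry out --- is precisely to reduce the possibly non-semilocal ring $A$ to a semilocal situation, for instance by passing to module-finite extension domains $B$ of $A$ and using the decomposition $\widehat{A_\fm} \otimes_{A_\fm} B = \prod_{\fn \mid \fm} \widehat{B_\fn}$ of semilocal completions to rewrite the generic formal fibre of $A_\fm$ in terms of those of the semilocal rings attached to $B$. This reduction, combined with Rotthaus's theorem, is the step I expect to be the main obstacle; once it is in place $A$ is a $G$-ring, and Theorem~\ref{BrodmannRotthaus} then yields that $A$ is quasi-excellent.
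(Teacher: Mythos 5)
The decisive step of your argument is the sentence in Step~3 invoking ``the classical reduction of the $G$-ring property of a Noetherian local ring to its quotients of dimension one'' to conclude that only the generic formal fibres $\widehat{A_\fm}\otimes_{A_\fm}Q(A)$ at maximal ideals $\fm$ with $\Ht\fm=1$ remain to be checked. No such reduction exists in \cite[\S 32]{Ma86} or elsewhere: knowing that $A/\fp$ is a $G$-ring for every $\fp\neq(0)$ reduces the problem to the generic formal fibre of $A_\fm$ for \emph{every} maximal ideal $\fm$, of arbitrary height, and the case $\Ht\fm\ge 2$ is exactly where the whole difficulty of the lifting problem sits. (Your assessment is in fact inverted: for $\Ht\fm=1$ the generic formal fibre is zero-dimensional, so Marot's theorem already makes it geometrically reduced, hence geometrically regular--no semilocal reduction or \cite{Rott79} is needed there--whereas nothing in your proposal touches height $\ge 2$.) If your claimed reduction were true, the theorem would follow almost immediately from Marot plus Theorem~\ref{BrodmannRotthaus}, which is incompatible with the known state of affairs: Nishimura--Nishimura needed resolution of singularities and Gabber needed local uniformization precisely to control these higher-height generic formal fibres. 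In the paper this is done by first upgrading $A$ to a Nagata normal, universally catenary $Z$-domain (Marot, Greco, and \cite[Theorem A]{Nishimura87}), introducing the minimal height $h_0\ge 2$ of a ``bad'' prime in the singular locus of the completions $(B_L)^*_\fn$, proving Claim~\ref{ex}, and then deriving a contradiction from Rotthaus' Hilfssatz (Theorem~\ref{Rotthaus' Hilfssatz}), whose hypotheses (i) and (ii) are verified by Claim~\ref{ht} and by a second application of Gabber's uniformization through Lemma~\ref{alterationcover} to force $\Delta(x)$ to be finite. None of this machinery--nor any substitute for it--appears in your outline, and your own Step~4 concedes that the remaining reduction is ``the step I expect to be the main obstacle,'' i.e.\ it is a plan rather than a proof.

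Two further points. First, your induction on $d=\dim A$ is not well-founded: a Noetherian ring may have infinite Krull dimension, and localizing to make the dimension finite destroys the $I$-adic completeness you need. The paper avoids this by Noetherian induction on ideals (choosing $J_0$ maximal with $A/J_0$ not quasi-excellent, so that every proper quotient is quasi-excellent--which plays the role of your inductive hypothesis) and by the minimality of $h_0$. Second, your Step~2 criterion (all $A/\fp$ $G$-rings implies $A$ is a $G$-ring) and the reduction to a principal ideal and to nonzero-divisor quotients are fine and parallel the paper's (1-1)--(1-3); the gap is entirely in the treatment of the generic formal fibre at maximal ideals of height at least two.
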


\begin{proof}
Assume the contrary. Let $A$ be a Noetherian ring, and $I$ an ideal of $A$.
Suppose that $A$ is $I$-adically complete, $A/I$ is quasi-excellent, but $A$ is not quasi-excellent.

\vspace{2mm}

\noindent
{\bf Step~1.}
We shall reduce this problem to a simpler case as long as possible.

\vspace{2mm}

\noindent
{\bf (1-1)}
The following are well-known facts.
\begin{itemize}
\item
Let $R$ be a Noetherian ring and $I_{1}$, $I_{2}$ be ideals of $R$
such that $I_{1} \supset I_{2}$.
If $R$ is $I_{1}$-adically complete, then $R$ is $I_{2}$-adically complete.
\item
Let $R$ be a Noetherian ring and $J_{1}$, $J_{2}$ be ideals of $R$.
If $R$ is $J_{1}$-adically complete, then $R/J_{2}$ is $((J_{1}+J_{2})/J_{2})$-adically complete.
\end{itemize}
We shall use these facts without proving them.

Suppose $I = (a_{1}, \ldots, a_{t})$.
Put $I_i = (a_{1}, \ldots, a_{i})$ for $i =1,\ldots,t$ and $I_0 = (0)$.
Remark that $A/I_i$ is $(I_{i+1}/I_i)$-adically complete, and
\[
(A/I_i)/(I_{i+1}/I_{i}) = 
A/I_{i+1}
\]
for $i = 0, 1, \ldots, t-1$.
Here, $I_{i+1}/I_{i}$ is a principal ideal of $A/I_i$.
Remember that $A/I_t$ is quasi-excellent, but $A/I_0$ is not so.
Therefore, there exists $i$ such that $A/I_{i+1}$ is quasi-excellent, but $A/I_i$ is not so.

Replacing $A/I_i$ and $I_{i+1}/I_i$ with $A$ and $I$ respectively, we may assume that 
\begin{itemize}
\item[(A1)]
$I$ is a principal ideal generated by some $x \neq 0$, that is, $I=(x)$.
\end{itemize}

\vspace{2mm}

\noindent
{\bf (1-2)}
We put
\[
{\mathcal F} = \{J \mid \mbox{$A/J$ is not quasi-excellent} \} .
\]
Since ${\mathcal F}$ contains $(0)$, the set ${\mathcal F}$ is not empty and there exists a maximal element $J_{0}$ in ${\mathcal F}$. Replacing $A/J_{0}$ by $A$, we may assume that
\begin{itemize}
\item[(A2)]
if $J \neq (0)$, then $A/J$ is quasi-excellent.
\end{itemize}

\vspace{2mm}

\noindent
{\bf (1-3)}
By Theorem \ref{BrodmannRotthaus}, $A$ is not a $G$-ring.
There exist prime ideals $P$ and $Q$ of $A$ such that
$P \supset Q$, and the generic fiber of
\[
A_{P}/QA_{P} \longrightarrow (A_{P}/QA_{P})^\wedge
\]
is not geometrically regular.
On the other hand, if $Q \neq (0)$,  the above map is a regular homomorphism by (A2). Therefore we know $Q = (0)$, that is, $A$ is an integral domain.

Since quasi-excellent rings are Nagata \cite[Theorem 78]{Ma80}, we know that $A/xA$ is a Nagata ring.
Since the lifting property holds for Nagata rings by Marot \cite{Ma75},
$A$ is a Nagata domain. Let $\overline{A}$ be the integral closure of $A$ in $Q(A)$. Then $\overline{A}$ is module-finite over $A$.

By Greco's theorem \cite[Theorem 3.1]{Gre76}, $\overline{A}$ is not quasi-excellent, since $A$ is not so. Here $\overline{A}$ is $x \overline{A}$-adically complete and $\overline{A}$ satisfies (A2). Replacing $\overline{A}$ with $A$, we further assume that 
\begin{itemize}
\item[(A3)]
$A$ is a Nagata normal domain.
\end{itemize}

\vspace{2mm}

\noindent
{\bf (1-4)}
Since $A/xA$ is quasi-excellent, $A/xA$ is a Nagata $Z$-ring.
Since the lifting property holds for Nagata $Z$-rings by 
Nishimura-Nishimura \cite[Theorem A]{Nishimura87}, $A$ is also a Nagata $Z$-ring. Since $A$ is a normal $Z$-ring, $\widehat{A_{P}}$ is a local normal domain for any prime ideal $P$ of $A$. Then by \cite[Theorem 31.6]{Ma86}, $A$ is universally catenary. Thus we know
\begin{itemize}
\item[(A4)]
$A$ is a $Z$-ring and universally catenary.
\end{itemize}

\noindent
{\bf Step~2.}
We assume (A1), (A2), (A3) and  (A4).

By Theorem \ref{BrodmannRotthaus}, $A$ is not a $G$-ring.
By \cite[Theorem 75]{Ma80},
there exists a maximal ideal $\fm$ of $A$ such that 
the homomorphism $A_{\fm} \rightarrow \widehat{A_{\fm}}$
is not regular. Consider the fibers of $A_{\fm} \rightarrow \widehat{A_{\fm}}$. By (A2), the fibers except for the generic fiber are geometrically regular. So we concentrate on the generic fiber.

Let $L$ be a finite algebraic extension of $Q(A)$, 
where $Q(A)$ is the field of fractions of $A$.
Let $B_{L}$ be the integral closure of $A$ in $L$.
Remark that $B_L$ is a finite $A$-module by (A3).

Consider the following cofiber squares:
\[
\begin{array}{ccccccc}
L & = & L & \longrightarrow & L\otimes_{A}\widehat{A_{\fm}} & = & 
\prod_{\fn} L \otimes_{B_{L}} \widehat{(B_{L})_{\fn}} \\
\uparrow & & \uparrow & & \uparrow & & \\
B_{L} & \longrightarrow & B_{L} \otimes_{A}A_{\fm} & \longrightarrow &
B_{L} \otimes_{A} \widehat{A_{\fm}} & = & \prod_\fn \widehat{(B_{L})_{\fn}} \\
\uparrow & & \uparrow & & \uparrow & & \\
A & \longrightarrow & A_\fm  & \longrightarrow & \widehat{A_\fm} 
\end{array}
\]
Here $\fn$ runs over all the maximal ideals of $B_L$ lying over $\fm$.

We want to discuss whether $L \otimes_{B_{L}} \widehat{(B_{L})_{\fn}}$ is regular or not for $L$ and $\fn$.

Recall that $(B_L)_\fn/x (B_L)_\fn$ is a $G$-ring since $A/xA$ is quasi-excellent.
Let $(B_L)_\fn^*$ be the $x(B_L)_\fn$-adic completion of $(B_L)_\fn$.
By the local lifting property (Definition~\ref{GrothendieckLifting} in Appendix~\ref{appA}) for $G$-rings (Rotthaus \cite{Rott79}),
\begin{equation}\label{$G$-ring}
\mbox{$(B_L)_\fn^*$ is a $G$-ring.}
\end{equation}
Hence, the homomorphism $(B_L)_\fn^* \rightarrow \widehat{(B_L)_\fn^*} = \widehat{(B_L)_\fn}$
is regular and ${\rm Sing}\big((B_L)_\fn^*\big)$ is a closed subset of $\Spec (B_L)_\fn^*$. Therefore, $L \otimes_{B_L}(B_L)_\fn^*$ is regular if and only if
 $L \otimes_{B_L}\widehat{(B_L)_\fn}$ is regular.
For a finite algebraic extension $L$ of $Q(A)$, we put 
\[
\fS_L =
\left\{ \fq_{\fn}^* \ \ \left| \ \
\begin{array}{l}
\mbox{$\fq_{\fn}^*$ is a minimal prime ideal of  ${\rm Sing}\big((B_L)_\fn^*\big)$
such that $\fq_{\fn}^* \cap B_L = (0)$, } \\
\mbox{where $\fn$ is a maximal ideal  of $B_L$. } \\
\end{array}
\right.
\right\}
\]
Here, $L \otimes_{B_L}(B_L)_\fn^*$ is not regular for some $L$ and $\fn$, since $A$ is not a $G$-ring.
Therefore, $\fS_L$ is not empty for some $L$.
We put 
\begin{equation}\label{h_0}
h_0 = \min\{ \Ht Q \mid \mbox{$Q \in \fS_L$ for some $L$} \} .
\end{equation}

Remember that $A$ is a $Z$-ring by (A4).
Since $B_L$ is a finite $A$-module, $B_L$ is also a $Z$-ring. It is easy to see that $\widehat{(B_L)_\fn}$ is the completion of both 
$(B_L)_\fn$ and $(B_L)_\fn^*$.
Since $(B_L)_\fn$ and  $(B_L)_\fn^*$ are $Z$-rings (cf.\ (\ref{$G$-ring})),
\begin{equation}\label{normal}
\mbox{$(B_L)_\fn$, $(B_L)_\fn^*$ and $\widehat{(B_L)_\fn}$ are 
 local normal domains.}
\end{equation}
Therefore we know 
\begin{equation}\label{h_0 ge 2}
h_0 \ge 2 .
\end{equation}

We shall prove the following claim in the rest of Step 2.

\begin{claim}\label{ex}
Let $\fp$ be a prime ideal of $A$.
If $\Ht \fp \le h_0$, then $A_\fp$ is excellent.
\end{claim}

Let $\fp$ be a prime ideal of $A$ such that $0 < \Ht \fp \le h_0$.
By (A4), $A$ is universally catenary.
It is enough to prove that $A_\fp$ is a $G$-ring.
By (A2) and \cite[Theorem 75]{Ma80}, it is enough to show that the generic fiber of $A_\fp \rightarrow \widehat{A_\fp}$ is geometrically regular. Let $L$ be a finite algebraic extension of $Q(A)$, and $B_L$ be the integral closure of $A$ in $L$. Consider the following cofiber squares.
\[
\begin{array}{ccccccc}
L & = & L & \longrightarrow & L\otimes_{A}\widehat{A_{\fp}} & = & 
\prod_{\fq} L \otimes_{B_{L}} \widehat{(B_{L})_{\fq}} \\
\uparrow & & \uparrow & & \uparrow & & \\
B_{L} & \longrightarrow & B_{L} \otimes_{A}A_{\fp} & \longrightarrow &
B_{L} \otimes_{A} \widehat{A_{\fp}} & = & \prod_\fq \widehat{(B_{L})_{\fq}} \\
\uparrow & & \uparrow & & \uparrow & & \\
A & \longrightarrow & A_\fp  & \longrightarrow & \widehat{A_\fp} 
\end{array}
\]
Here $\fq$ runs over all the prime ideals of $B_L$ lying over $\fp$.
Since $A$ is normal, we have
\begin{equation}\label{pq}
0 < \Ht \fq = \Ht \fp \le h_0.
\end{equation}

It is enough to show that $L \otimes_{B_{L}} \widehat{(B_{L})_{\fq}}$
is regular. Let $\fn$ be a maximal ideal of $B_L$ such that $\fn \supset \fq$.
Let $\fq^*$ be a minimal prime ideal of $\fq (B_L)_\fn^*$.
Since $(B_L)_\fn \rightarrow (B_L)_\fn^*$ is flat, we have $\fq^* \cap B_L = \fq$. Consider the commutative diagram:
\[
\begin{array}{ccccccc}
 &  & (B_{L})_{\fn}^* & \longrightarrow & ((B_{L})_{\fn}^*)_{\fq^*} & 
\stackrel{\beta}{\longrightarrow} & ( ((B_{L})_{\fn}^*)_{\fq^*} )^\wedge \\
& & \uparrow & & \hphantom{\scriptstyle \alpha}\uparrow{\scriptstyle \alpha} & &
 \hphantom{\scriptstyle \hat{\alpha}}\uparrow{\scriptstyle \hat{\alpha}}  \\
B_{L} & \longrightarrow & (B_{L})_\fn
& \longrightarrow &
(B_{L})_\fq  & \longrightarrow  & \widehat{(B_{L})_\fq} 
\end{array}
\]
The map $\alpha$ as above is a flat local homomorphism.
Since the closed fiber of $\alpha$ is of dimension $0$, we have
$\dim (B_{L})_\fq = \dim ((B_{L})_{\fn}^*)_{\fq^*}$.
In particular 
\begin{equation}\label{q*}
\Ht \fq = \Ht \fq^*.
\end{equation}

By (\ref{$G$-ring}), $\beta$ is a regular homomorphism
and ${\rm Sing}\big((B_{L})_{\fn}^*\big)$ is a closed subset of
$\Spec (B_{L})_{\fn}^*$.

Let $c_\fn^*$ be the defining ideal of ${\rm Sing}\big((B_{L})_{\fn}^*\big)$
satisfying $c_\fn^* = \sqrt{c_\fn^*}$.
We put $T = ( ((B_{L})_{\fn}^*)_{\fq^*} )^\wedge$.
Since $\beta$ is a regular homomorphism, $c_\fn^*T$ is a defining ideal of ${\rm Sing}(T)$.
Suppose
\[
c_\fn^* = \fq_{\fn,1}^* \cap \cdots \cap \fq_{\fn,s}^* ,
\]
where $\fq_{\fn, i}^*$'s are prime ideals of $(B_{L})_{\fn}^*$
such that $\fq_{\fn, i}^* \not\supset \fq_{\fn, j}^*$ if $i \neq j$.

One of the following three cases occurs:
\begin{itemize}
\item[Case 1.]
If none of $\fq_{\fn, i}^*$'s is contained in $\fq^*$, then
$c_\fn^*T = T$.
\item[Case 2.]
If $\fq_{\fn, i}^* = \fq^*$ for some $i$, then
$c_\fn^*T = \fq^*T$.
\item[Case 3.]
Suppose that $\fq_{\fn, 1}^*$, \ldots,  $\fq_{\fn, t}^*$ are properly contained in $\fq^*$, and  $\fq_{\fn, t+1}^*$, \ldots,  $\fq_{\fn, s}^*$ are not contained in $\fq^*$
for some $t$ satisfying $1 \le t \le s$.
Then, $c_\fn^*T = \fq_{\fn, 1}^*T \cap \cdots \cap \fq_{\fn, t}^*T$.
\end{itemize}

In any case as above, we can verify $c_\fn^*T \cap B_L \neq (0)$ as follows. In Case 1, we have $c_\fn^*T \cap B_L= B_L \ne (0)$. In Case 2, we have $\fq^*T \cap B_L = \big(\fq^*T \cap (B_L)_\fn^*\big) \cap B_L = \fq^* \cap B_L = \fq \neq (0)$ by (\ref{pq}). In Case 3, suppose that $\fq_{\fn, i}^*$ is properly contained in $\fq^*$. Then by (\ref{pq}) and (\ref{q*}), we have $\Ht \fq_{\fn, i}^* < h_0$. Since $\fq_{\fn, i}^*$ is in ${\rm Sing}\big((B_L)_\fn^*\big)$,  we have $\fq_{\fn, i}^* \cap B_L \neq (0)$ by the minimality of $h_0$.

Take $0 \neq b \in c_\fn^*T \cap B_L$.
Since  $c_\fn^*T$ is the defining ideal of ${\rm Sing}(T)$,
$T \otimes_{B_L} B_L[b^{-1}]$ is regular.
Since 
\[
 \widehat{(B_{L})_\fq} \otimes_{B_L} B_L[b^{-1}] \stackrel{\hat{\alpha}\otimes 1}{\longrightarrow}
T \otimes_{B_L} B_L[b^{-1}]
\]
is faithfully flat,
$\widehat{(B_{L})_\fq} \otimes_{B_L} B_L[b^{-1}]$ is regular.
Hence, $\widehat{(B_{L})_\fq} \otimes_{B_L} L$ is regular.
We have completed the proof of Claim \ref{ex}.

\vspace{2mm}

\noindent
{\bf Step 3.}
Here, we shall complete the proof of Theorem \ref{quasi-excellent}.

First of all, remember  the following Rotthaus' Hilfssatz (cf. \cite[Theorem 1.9 and Proposition 1.18]{Nishimura87} or originally \cite{Rott80}).

\begin{theorem}[Rotthaus' Hilfssatz]
\label{Rotthaus' Hilfssatz}
Let $B$ be a Noetherian ring and $\fn \in \Max B$.
Assume that $B$ is $xB$-adically complete Nagata ring.

We put
\[
\Gamma(\fn) = \{ \gamma  \mid \fn \in \gamma \subset \Max B, \ {}^\#\gamma < \infty \} .
\]
For $\gamma \in \Gamma(\fn)$, we put $S_\gamma = B \setminus \cup_{\fa \in \gamma}\fa$ and $B_\gamma = S_\gamma^{-1}B$. Consider the homomorphism $B_\gamma^* \rightarrow B_\fn^*$ induced by $B_\gamma \rightarrow B_\fn$, where $( \ )^*$ denotes the $(x)$-adic completion.

Let $\fq_\fn^*$ be a minimal prime ideal of $\Sing(B_\fn^*)$.
For each $\gamma \in \Gamma(\fn)$, we put 
$\fq_\gamma^*:=\fq_\fn^* \cap B_\gamma^*$. We define
\[
\Delta_\gamma(x):=\left\{ 
\overline{Q} \cap B \mid
\overline{Q} \in {\rm Min}_{\overline{(B_\gamma^*/\fq_\gamma^*)}}
\big(\overline{(B_\gamma^*/\fq_\gamma^*)}/(x)\big)
\right\} , \ \ \Delta(x):=\bigcup_{\gamma \in \Gamma(\fn)} \Delta_\gamma(x)
\]
where $\overline{(B_\gamma^*/\fq_\gamma^*)}$ is the normalization of $B_\gamma^*/\fq_\gamma^*$ in the field of fractions.

Assume the following two conditions:
\begin{itemize}
\item[(i)]
for each $\gamma \in \Gamma(\fn)$, $\Ht \fq_\gamma^* > 0$,
\item[(ii)]
${}^\# \triangle (x) < \infty$.
\end{itemize}

Then $\fq_\fn^* \cap B \neq (0)$ is satisfied.
\end{theorem}

We refer the reader to \cite{Nishimura87} for the proof of this theorem. We deeply use it in our proof.

\vspace{1mm}

Now, we start to prove Theorem \ref{quasi-excellent}.

Let $L$ be a finite algebraic extension of $Q(A)$. Let $B$ be the integral closure of $A$ in $L$. Let $\fn$ be a maximal ideal of $B$. Suppose that $\fq_{\fn}^*$ is a minimal prime ideal of ${\rm Sing}(B_\fn^*)$ such that 
\begin{equation}\label{intersection}
\fq_{\fn}^* \cap B=(0)
\end{equation}
 and $\Ht \fq_{\fn}^*=h_0$.
We remark that such $L$, $B$, $\fn$, $\fq_\fn^*$ certainly exist by the definition of $h_0$ (see (\ref{h_0})).

We define $B_\fn^*$, $B_\gamma$, $B_\gamma^*$, $\fq_\gamma^*$, $\Delta_\gamma(x)$, $\Delta(x)$ as in Theorem~\ref{Rotthaus' Hilfssatz}.
Recall that $B_\gamma$ is a semi-local ring satisfying 
${\rm Max}(B_\gamma) = \{ \fa B_\gamma \mid \fa \in \gamma \}$.
Since $B$ is $xB$-adically complete, any maximal ideal of $B$ contains $x$.
Since $B_\gamma/xB_\gamma$ is isomorphic to $B_\gamma^*/xB_\gamma^*$, 
there exists one-to-one correspondence between $\gamma$
and the set of maximal ideals of $B_\gamma^*$.
Let $\fn^*$ be the maximal ideal of $B_\gamma^*$
corresponding to $\fn$, that is, $\fn^* = \fn B_\gamma^*$.

\vspace{2mm}

In the rest of this proof, we shall prove the conditions (i) and (ii) in Theorem \ref{Rotthaus' Hilfssatz}. Then it contradicts (\ref{intersection}) and this completes the proof of Theorem \ref{quasi-excellent}.

\vspace{2mm}

Put $C_\gamma = B_\gamma^*/\fq_\gamma^*$.
Since $A$ is a Nagata ring, $C_\gamma$ is a Nagata ring, too.
Therefore, the normalization $\overline{C_\gamma}$ is a
finite $C_\gamma$-module.
Note that $x \not\in \fq_\gamma^*$ since $\fq_\fn^* \cap B = (0)$.
Take
$\overline{Q} \in 
{\rm Min}_{\overline{C_\gamma}}(\overline{C_\gamma}/x \overline{C_\gamma})$.
Since $C_\gamma$ is universally catenary,
the dimension formula holds between $C_\gamma$ and $\overline{C_\gamma}$
(\cite[Theorem 15.6]{Ma86}).
Thus we know that $\overline{Q} \cap C_\gamma$ is a minimal prime ideal of $xC_\gamma$.
Therefore, $\triangle_\gamma (x)$ defined in Theorem \ref{Rotthaus' Hilfssatz}
coincides with
\[
\{ \tilde{Q} \cap B \mid \tilde{Q} \in {\rm Min}_{B_\gamma^*}\left( B_\gamma^*/
\fq_\gamma^* + xB_\gamma^* \right) \} .
\]

Here, we shall prove the following claim.

\begin{claim}
\label{ht}
\begin{enumerate}
\item
For each $\gamma \in \Gamma(\fn)$, 
$\Ht \fq_\gamma^* = h_0$.
\item
For any $Q \in \triangle (x)$, $\Ht Q = h_0 + 1$.
\end{enumerate}
\end{claim}

First, we shall prove (1). Consider the following homomorphisms.
\begin{equation}\label{gf}
(B_\gamma^*)_{\fn^*} \stackrel{f}{\longrightarrow}
B_\fn^* \stackrel{g}{\longrightarrow} \widehat{B_\fn},
\end{equation}
where $\fn^*=\fn B_{\gamma}^*$. By the local lifting property for $G$-rings, $B_\gamma^*$ is a $G$-ring.
Since $\widehat{B_\fn} = \widehat{(B_\gamma^*)_{\fn^*}}$,
$gf$ is a regular homomorphism.
Since $g$ is faithfully flat, $f$ is a regular homomorphism
by \cite[(33.B)]{Ma80} or \cite[Theorem 32.1]{Ma86}.
Since $\fq_\gamma^*(B_\gamma^*)_{\fn^*} = \fq_\fn^* \cap (B_\gamma^*)_{\fn^*}$,
\begin{equation}\label{min}
\mbox{$\fq_\gamma^*(B_\gamma^*)_{\fn^*}$ is a minimal prime ideal of
${\rm Sing}((B_\gamma^*)_{\fn^*})$.}
\end{equation}
Furthermore, $\fq_\fn^*$ is a minimal prime ideal of $\fq_\gamma^*B_\fn^*$.
Then, we have
\[
\Ht \fq_\gamma^* = \Ht \fq_\gamma^* (B_\gamma^*)_{\fn^*}
 = \Ht \fq_\fn^* = h_0.
\]
The assertion (1) has thus been proved. Since $h_0 \ge 2$ as in (\ref{h_0 ge 2}), the condition (i) in Theorem \ref{Rotthaus' Hilfssatz} follows from the above assertion (1).

Next, we prove (2).
Take $Q \in \triangle_\gamma(x)$ for some $\gamma \in \Gamma(\fn)$.

We shall prove that $B_\gamma^*$ is normal. For $\fa \in \gamma$, let $\fa^*$ denote the maximal ideal $\fa B_\gamma^*$ of $B_\gamma^*$. Here $B$ is a normal $Z$-ring, since $A$ is.
Therefore, $\widehat{B_\fa}$ is a normal local domain.
The map $(B_\gamma^*)_{\fa^*} \rightarrow \widehat{(B_\gamma^*)_{\fa^*}}
= \widehat{B_\fa}$ is a faithfully flat morphism.
Then $(B_\gamma^*)_{\fa^*}$ is normal by the corollary of \cite[Theorem 23.9]{Ma86}.
Hence, we know that $B_\gamma^*$ is normal.

By definition, there exists $\tilde{Q} \in {\rm Min}_{B_\gamma^*}\left( B_\gamma^*/
\fq_\gamma^* + xB_\gamma^* \right)$ such that $Q = \tilde{Q} \cap B$.
Since $B_\gamma \rightarrow B_\gamma^*$ is flat and 
$QB_\gamma^* = \tilde{Q}$, we have
\begin{equation}\label{Ht1}
\Ht \tilde{Q} = \dim (B_\gamma^*)_{\tilde{Q}} = \dim (B_\gamma)_Q = \Ht Q .
\end{equation}
Since $B_\gamma^*$ is a Noetherian normal ring, $B_\gamma^*$ is the direct product of finitely many integrally closed domains.
Remember that $B_\gamma^*$ is universally catenary.
Then we have
\begin{equation}\label{Ht2}
\Ht  \tilde{Q} = \Ht \fq_\gamma^* + 1 .
\end{equation}
By (\ref{Ht1}), (\ref{Ht2}) and the assertion (1) together, we obtain $\Ht Q = h_0+1$.

We have completed the proof of Claim \ref{ht}.

Let $\fq$ be a prime ideal of $B$ such that $\Ht \fq \le h_0$.
Since $A$ is normal, $\Ht (\fq \cap A) = \Ht \fq \le h_0$.
Hence, $A_{(\fq \cap A)}$ is excellent by Claim \ref{ex}.
Therefore, $B_\fq$ is excellent. Let us remember that $B/xB$ is quasi-excellent. Then by Lemma \ref{alterationcover},
there exists $b_\fq \in B \setminus \fq$ such that there exists an alteration covering
\begin{equation}\label{covering}
\{ X_{b_\fq, i} \stackrel{\phi_{b_\fq, i}}{\longrightarrow} 
{\rm Spec}(B[b_\fq^{-1}]) \}_i
\end{equation}
such that
\begin{equation}\label{regular}
\phi_{b_\fq, i}^{-1}\biggl(\Spec \dfrac{B[b_\fq^{-1}]}{xB[b_\fq^{-1}]}\biggl)
\subset {\rm Reg}(X_{b_\fq, i})
\end{equation}
for each $i$.
We put
\[
\Omega = \bigcup_{\fq} \Spec(B[b_\fq^{-1}])
\subset \Spec(B) .
\]
By definition, $\Omega$ is an open set that contains all the prime ideals of $B$ 
of height less than or equal to $h_0$.
Hence, the complement $\Omega^c$ contains only finitely many prime ideals
of height $h_0 + 1$.
If $\triangle (x)$ is contained in $\Omega^c$, then
$\triangle (x)$ must be a finite set by Claim~\ref{ht} (2).

Thus, it suffices to prove
\[
\triangle (x) \subset \Omega^c .
\]
Assume the contrary.
Take $Q \in \triangle (x) \cap \Omega$.
Since  $Q \in \triangle (x)$, there exists $\tilde{Q} \in  {\rm Min}_{B_\gamma^*}\left( B_\gamma^*/
\fq_\gamma^* + xB_\gamma^* \right)$ such that $\tilde{Q} \cap B = Q$
for some $\gamma \in \Gamma(\fn)$. Since  $Q \in \Omega$, we find that $Q \in \Spec(B[b_\fq^{-1}])$ for some $\fq$ with $\Ht \fq \le h_0$.

Since (\ref{covering}) is an alteration covering, we have a proper surjective generically finite map
$\pi:V \rightarrow \Spec(B[b_\fq^{-1}])$, together with an open covering $V = \bigcup_i V_i$ and a morphism $\psi_i:V_i \rightarrow X_{b_\fq, i}$ for each $i$ with 
commutative diagrams as in (\ref{diagram}).
Consider the following diagram:
\[
\begin{array}{ccccc}
V'_{i} & \subset & V' & & \\
\hphantom{\scriptstyle \psi'_i}\downarrow{\scriptstyle \psi'_i} & & \hphantom{\scriptstyle f'}\downarrow{\scriptstyle f'}
& & \\
X'_{b_\fq, i} & \stackrel{\phi'_{b_\fq,i}}{\longrightarrow} & \Spec(B_{\gamma}^{*}[b_\fq^{-1}])
 & \longrightarrow & \Spec(B_{\gamma}^{*}) \\
\hphantom{\scriptstyle g}\downarrow{\scriptstyle g} & & \hphantom{\scriptstyle h}\downarrow{\scriptstyle h} & & \downarrow \\
X_{b_\fq,i} & \stackrel{\phi_{b_\fq,i}}{\longrightarrow} & \Spec(B[b_\fq^{-1}])
 & \longrightarrow & \Spec(B)
\end{array}
\]
We put $( \ )'=( \ ) \times_{\Spec B} \Spec B_{\gamma}^{*}$.
Then, both $\tilde{Q}$ and $\fq_{\gamma}^{*}$ are contained in $\Spec(B_{\gamma}^{*}[b_\fq^{-1}])$.
Since $f':V' \rightarrow \Spec(B_{\gamma}^{*}[b_\fq^{-1}])$ is proper surjective, there exist $\xi_{1}, \xi_{2} \in V'$ such that $f'(\xi_{1})=\tilde{Q}$, $f'(\xi_{2})=\fq_{\gamma}^{*}$,
and $\xi_{1}$ is a specialization of $\xi_{2}$.
Since $V'=\bigcup_iV'_i$ is a Zariski open covering, we have $\xi_{1} \in V'_{i}$ for some $i$.
Since $V'_{i}$ is closed under generalization,
both $\xi_{1}$ and $\xi_{2}$ are contained in $V'_{i}$.
Here, we put $\eta_{1} = \psi'_i(\xi_{1})$, $\eta_{2} = \psi'_i(\xi_{2})$ and
$\zeta_{1} = g(\psi'_i(\xi_{1}))$.

Since $x \in Q$ and $\phi_{b_\fq, i}(\zeta_{1})=Q$,
we know that ${\mathcal O}_{X_{b_\fq, i}, \zeta_{1}}$ is a regular local ring by (\ref{regular}).
Since $B \rightarrow B_{\gamma}^{*}$ is flat, ${\mathcal O}_{X_{b_\fq, i}, \zeta_{1}} \rightarrow 
{\mathcal O}_{X'_{b_\fq, i}, \eta_{1}}$ is a flat local homomorphism.
Its closed fiber is the identity
since the maximal ideal of ${\mathcal O}_{X_{b_\fq, i}, \zeta_{1}}$ contains $x$.
Thus, ${\mathcal O}_{X'_{b_\fq, i}, \eta_{1}}$ is a regular local ring.
Since $\eta_{2}$ is a generalization of $\eta_{1}$, 
${\mathcal O}_{X'_{b_\fq, i}, \eta_{2}}$ is also a regular local ring.
Here, $(B_{\gamma}^*)_{\fq_{\gamma}^{*}} \rightarrow {\mathcal O}_{X'_{b_\fq, i}, \eta_{2}}$
is flat, since $h(\fq_{\gamma}^{*}) = (0)$ by (\ref{intersection}). Therefore, $(B_{\gamma}^*)_{\fq_{\gamma}^{*}}$ is a regular local ring. It contradicts (\ref{min}). The condition (ii) in Theorem \ref{Rotthaus' Hilfssatz} has been proved.

We have completed the proof of Theorem~\ref{quasi-excellent}.
\end{proof}

\appendix
\section{Lifting property, local lifting property, etc.}
\label{appA}

We introduce notions of the lifting property, the local lifting property, etc.
We make a table on the known results here.

\begin{definition}
\label{GrothendieckLifting}
Let $\mathbf{P}$ be a ring theoretic property of Noetherian rings.

\begin{enumerate}
\item
We say that the \textit{Lifting Property} ($LP$ for short) holds for $\mathbf{P}$, if the following condition holds. For any Noetherian ring $A$ with an ideal $I$ such that $A$ is $I$-adically complete, if $A/I$ is $\mathbf{P}$, then $A$ is $\mathbf{P}$.

\item
We say that the \textit{Local Lifting Property} ($LLP$ for short) holds for $\mathbf{P}$, if the lifting property holds for $\mathbf{P}$ in the case where the given ring $A$ is semi-local.

\item
We say that the  \textit{Power Series Extension Property} ($PSEP$ for short) holds for $\mathbf{P}$, if the following condition holds. For any Noetherian ring $A$, if $A$ is $\mathbf{P}$, then the formal power series ring $A[[x]]$ is $\mathbf{P}$.

\item
We say that the  \textit{Ideal-adic Completion Property} ($ICP$ for short) holds for $\mathbf{P}$, if the following condition holds. For any Noetherian ring $A$ with an ideal $I$, if $A$ is $\mathbf{P}$, then the $I$-adic completion of $A$ is $\mathbf{P}$.
\end{enumerate}
\end{definition}

For any property $\mathbf{P}$, it is easy to see the following implications:
\[
LLP \Longleftarrow LP \Longrightarrow PSEP .   
\]

If the category of Noetherian rings having the property $\mathbf{P}$ is closed
under homomorphic images, then the implication 
\[
PSEP \Longrightarrow ICP
\]
holds since $A[[x_1. \ldots, x_n]]/(x_1-a_1, \ldots, x_n-a_n)$ coincides with the $(a_1, \ldots, a_n)$-adic completion of $A$.

If the category of Noetherian rings having the property $\mathbf{P}$ is closed
under polynomial extension, then the implication 
\[
PSEP \Longleftarrow ICP
\]
holds since $A[[x_1. \ldots, x_n]]$ is the $(x_1, \ldots, x_n)$-adic completion of $A[x_1, \ldots, x_n]$.

In this article, we consider $\mathbf{P}$ as one of the following properties:
\[
\mbox{$G$-ring, quasi-excellent, excellent, universally catenary, Nagata, Nagata $Z$-ring}.
\]
Let us remark that the category of Noetherian rings with $\mathbf{P}$ as above is closed under polynomial extensions and homomorphic images~\footnote{Polynomial rings over a $G$-ring are also $G$-rings (cf.\ \cite[Theorem 77]{Ma80}). One can prove the same for $Z$-rings in the same way. 
The category of Nagata rings is closed under polynomial extensions, homomorphic images and localizations~(cf.\ \cite[Theorem 72]{Ma80}).}.

By Nagata-Ratliff theorem (\cite[Theorem 3.1]{Rat}),  PSEP holds for $\mathbf{P}=$ universally catenary. In 1975, Marot \cite{Ma75}  proved that LP holds for $\mathbf{P}=$ Nagata. In 1979, Rotthaus \cite{Rott79} proved that LLP holds for $\mathbf{P}=$ $G$-ring. Hence, LLP holds for $\mathbf{P}=$ quasi-excellent. In 1981, Nishimura \cite{Nishimura81} found an example, and proved that ICP does not hold for $\mathbf{P}=$ $G$-ring. In 1982, Greco \cite{Gre82} found an example, and proved that LLP does not hold for both $\mathbf{P}=$ universally catenary and  $\mathbf{P}=$ excellent. In 1987, Nishimura-Nishimura (\cite[Theorem A]{Nishimura87}) proved that LP holds for $\mathbf{P}=$ Nagata $Z$-ring. They also proved that  LP holds for $\mathbf{P}=$ quasi-excellent, if the ring contains a field of characteristic $0$ (\cite[Theorem B]{Nishimura87}). 
As in Main~Theorem~\ref{MT1}, Gabber proved that LP holds for $\mathbf{P}=$ quasi-excellent in general.
\[
\begin{tabular}{|c||c|c|c|c|}
\hline
& LLP & LP & PSEP & ICP \\ 
\hline\hline
\mbox{$G$-ring} & \hphantom{AA}$\bigcirc$\hphantom{AA} & \hphantom{AA}$\times$\hphantom{AA} & \hphantom{AA}$\times$\hphantom{AA} & \hphantom{AA}$\times$\hphantom{AA} \\ \hline
\mbox{quasi-excellent} & $\bigcirc$ & $\bigcirc$ & $\bigcirc$ & $\bigcirc$ \\ \hline
\mbox{excellent} &  $\times$ &  $\times$ & $\bigcirc$ & $\bigcirc$ \\ \hline
\mbox{universally catenary} &  $\times$ &  $\times$ & $\bigcirc$ & $\bigcirc$ \\ \hline
\mbox{Nagata} &  $\bigcirc$ &  $\bigcirc$ & $\bigcirc$ & $\bigcirc$ \\ \hline
\mbox{Nagata $Z$-ring} &   $\bigcirc$ &  $\bigcirc$ & $\bigcirc$ & $\bigcirc$ \\ \hline
\end{tabular}
\]
Remark that PSEP and ICP hold for $\mathbf{P}=$ excellent since they hold for both $\mathbf{P}=$ quasi-excellent and  $\mathbf{P}=$ universally catenary.
Thus Main~Theorem~\ref{MT2} follows.

\begin{acknowledgement}
The authors are grateful to Professor O. Gabber for permitting us to write this paper. The authors are also grateful to Professor J. Nishimura for listening to the proof of the main theorem and providing us with useful suggestions kindly. 
We also thank the referees for valuable comments.
The first author was partially supported by the Ministry of Education, Science, Sports and Culture, Grant-in-Aid for Scientific Research (C), 2015-2018（15K04828). The second author was partially supported by the Ministry of Education, Science, Sports and Culture, Grant-in-Aid for Scientific Research (C), 2018-2021（18K03257).
\end{acknowledgement}

\end{document}